   \edef\Gin@extensions{\Gin@extensions,.mps}
\newcommand\ZZ{\mathbb{Z}}
\newcommand\NN{\mathbb{N}}
\newcommand\RR{\mathbb{R}}
\newcommand \gR{\mathbf{R}} 
\newcommand\CC{\mathbb{C}}
\newtheorem{remark}{Remark}[section]
\newcommand \formule[1]{{\left\{ {\arraycolsep2pt\begin{array}{lll} #1 \end{array}}\right.}}
\newcommand{\gui}[1]{``{#1}''}
\newcommand\fm{\mathfrak{m}}
\newcommand \Xn {X_1,\ldots,X_n}
\newcommand \fn {f_1,\ldots,f_n}
\newcommand \xn {x_1,\ldots,x_n}
\newcommand \kuX {\gk[\uX]}
\newcommand \uX {\underline{X}}
\newcommand \uY {\underline{Y}}
\newcommand \ux {\underline{x}}
\newcommand \ua {\underline{a}}
\newcommand \uwha {\underline{\wh{a}}}
\newcommand \wha {{\wh{a}}}
\newcommand \uxi{{\underline{\xi}}}
\newcommand \uze{{\underline{0}}}
\newcommand \uwv {\underline {\wi v}}
\newcommand \AXn {\gA[\Xn]}
\newcommand \Kxn {\gK[\xn]}
\newcommand \kXn {\gk[\Xn]}
\newcommand \Axn {\gA[\xn]}
\newcommand \AuX {\gA[\uX]}
\newcommand \Aux {\gA[\ux]}
\newcommand \Co{\mathbb{C}}
\newcommand \gA{\mathbf{A}}
\newcommand \gB{\mathbf{B}}
\newcommand \gC{\mathbf{C}}
\newcommand \gD{\mathbf{D}}
\newcommand \gE{\mathbf{E}}
\newcommand \gK{\mathbf{K}}
\newcommand \gk{\mathbf{k}}
\newcommand \pcB {{\partial\cB}}
\newcommand \cB {\mathcal{B}}
\newcommand{\A}{{\gA}}
\newcommand{\D}{{\gD}}
\newcommand{\K}{{\gK}}
\newcommand\junk[1]{}
\newcommand\abs[1]{\left|{#1}\right|}
\newcommand\gen[1]{\left\langle{#1}\right\rangle} 
\newcommand\geN[1]{\big\langle{#1}\big\rangle} 
\newcommand\sotq[2]{\so{\,#1\mid#2\,}} 
\newcommand\so[1]{\left\{{#1}\right\}} 
\newcommand \ov[1] {\overline{#1}}
\newcommand \wh[1] {\widehat{#1} }
\newcommand \wi[1] {\widetilde{#1} }
\newcommand\lrb[1] {\llbracket #1 \rrbracket}
\newcommand\lrbn {\lrb{1..n}}
\newcommand\lrbs {\lrb{1..s}}
\newcommand \som {\sum\nolimits}
\newcommand \Span {\mathrm{Span}}
\renewcommand \mod {\;\mathrm{mod}\;}
\newcommand \Alg {$\gA$-algebra }
\newcommand \Algs {$\gA$-algebras }
\newcommand \Algz {$\gA$-algebra}
\newcommand \Klg {$\gK$-algebra }
\newcommand \Klgsz {$\gK$-algebras}
\newcommand \Kev {$\gK$-vector space }
\newcommand \kev {$\gk$-vector space }
\newcommand \klg {$\gk$-algebra }
\newcommand \klgz {$\gk$-algebra}
\newcommand \Amo {$\gA$-module }
\newcommand \Amoz {$\gA$-module}
\newcommand \coes {coefficients }
\newcommand \Tho {Theorem }
\newenvironment{pf}[1]{
\trivlist \item[\hskip \labelsep{{\it #1}.}]}{\hfill\mbox{$\Box$}
\endtrivlist}
\begin{document}
%\junk{

\author{M. Emilia Alonso\thanks{Universidad Complutense, Madrid, Espa\~na. {\tt mariemi@mat.ucm.es}.  \url{http://www.mat.ucm.es/imi/People/Alonso_Garcia_MariaEmilia.htm} Supported by Spanish GR MTM-2011-22435 and MTM-2014-55565.}
\and
Henri Lombardi\thanks{Univ. de Franche-Comt\'e, 25030
Besan\c{c}on cedex, France. {\tt henri.lombardi@univ-fcomte.fr}. \url{http://hlombardi.free.fr/} Supported by Spanish GR MTM-2011-22435 and MTM-2014-55565.} }

\title{Local B\'ezout Theorem for Henselian rings}

\maketitle

\begin{abstract}In this paper we prove what we  call {\it Local B\'ezout Theorem} 
(Theorem \ref{bezloc}). It is a formal abstract algebraic version, in the frame of Henselian rings and $\fm$-adic topology, 
of a well known theorem in the analytic
complex case. This classical theorem says that, given an isolated  point of multiplicity $r$ 
as a zero of a local complete intersection,  after deforming the \coes of these equations we find 
in a sufficiently small  neighborhood of this point exactly $r$ isolated zeroes  counted with 
 multiplicities.  
Our main tools are, first the {\it border bases} \cite{Mou99},  which turned out to be an 
efficient computational tool to deal with  deformations of algebras.
Second we use an important result of de Smit and Lenstra 
\cite{SmLe97}, for which there exists a constructive proof in \cite{lq15}. Using these  tools we find a very simple
proof of our theorem, which seems new in the classical literature.

 \end{abstract}

{\bf Keywords.} 
Local B\'ezout Theorem, Henselian rings, Roots continuity, Stable computations, Constructive Algebra. 

{\bf MSC 2010.} primary 13J15; secondary 13P10, 13P15, 14Q20, 03F65

\section{Introduction}

In this paper we use ideas from computer algebra  to prove what we  call {\it Local B\'ezout Theorem} 
(Theorem \ref{bezloc}). It is a formal abstract algebraic version, in the frame of Henselian rings and $\fm$-adic topology, 
of a well known theorem in the analytic
complex case.  This classical theorem says that, given an isolated  point of multiplicity $r$ 
as a zero of a local complete intersection,  after deforming the \coes of these equations we find 
in a sufficiently small  neighborhood of this point exactly $r$ isolated zeroes  counted with 
 multiplicities. 

As far as we know the proofs of this classical  result in the literature are essentially:
% one by \hbox{Milnor} \cite[Appendix B]{Mil68} using the degree of finite maps, another 
 by Arnold using powerfully 
 the topological degree and Weierstrass theorems  \cite[section I-4.3]{ArBook},  
% a third one  by Gunning and Rossi using 
%coverings of analytic spaces \cite{GR}, and a fourth 
and another one that can be deduced from  Griffiths-Harris~\cite[Residue Theorem, p.\ 656]{GH} using the theory of residues and its relationship
 with multiplicity. 
 
Here we state and prove an {\em algebraic version}
of this theorem in the setting of arbitrary Henselian rings and $\fm$-adic topology. 
We are somehow inspired by Arnold in \cite[section I-4.3]{ArBook}, exploiting an abstract version of Weierstrass division
(in a Henselian ring) and introducing also an  
abstract version of which he called  the \gui{multilocal ring}. Roughly speaking we consider a %family of 
finitely presented \Algz, where $(\gA, \fm, \gk)$ is a local Henselian ring such that the special point is a \klg with an isolated zero of multiplicity $r$ and we prove that the \gui{multilocal ring} 
determined by this point is a free \Amo of rank $r$.

\smallskip Our main tools are, first the {\it border bases} \cite{Mou99},  which turned out to be an 
efficient computational tool to deal with  deformations of algebras.
Second we use an important result of de Smit and Lenstra 
\cite{SmLe97}, for which there exists a constructive proof in \cite{lq15}.

In Section~\ref{subsecBB} we recall the definition and  main properties of border bases. 

 We point out that, 
 to obtain a fully algorithmic proof of our results, we rely on the
 constructive proof 
of the Multivariate Hensel Lemma (MHL for short) given in \cite{ACL2014}. 
Also, in order to get true algorithms, fields are assumed to be discrete and local rings to 
be residually discrete.

The pure abstract algebraic form of Local B\'ezout Theorem  is given in Theorems  \ref{bezloc1} and~\ref{bezloc}.

\smallskip  Going futher into details let us explain the  form of Local B\'ezout Theorem
we are interested~in. 

Assume $(\gA, \fm,\gk )$ is a local normal domain, $f_1,\ldots,f_n\in\AXn$ and let
$$
\gB:=\AXn/\!\gen{f_1,\ldots,f_n}\hbox{ and }\gC=\gB_{\fm +\gen{\ux}}=
\A[x_1,\ldots,x_n]_{ \fm +\gen{\ux}},
$$
where  $\gen{\ux}=\gen{\xn} $, $x_i $ is $X_i \mod  \gen{f_1,\ldots,f_n}  $, 
and $ f_i(\uze)\in \fm$ \hbox{for $i=1,\ldots,n$}.
We denote by~$\K$ the quotient field of $\A$.  We   assume   $\K$  to be an  algebraically closed field 
and  therefore   $\A$  to be a Henselian ring.

We assume that the \klg 
$$
\overline{\gC}:=(\kXn/\!\gen{\ov{f_1},\ldots,\ov{f_n}})_{ \gen{\ov{x_1},\ldots,\ov{x_n}} }=\gk[\ov{x_1},\ldots,\ov{x_n}]_{ \gen{\ov{x_1},\ldots,\ov{x_n}} }
$$ 
is zero-dimensional, where $\ov{f_i}$ (resp. $\ov{x_i}$) is the image of $f_i$ (resp. ${x_i}$) by $\otimes_\gA\gk$. 
%(we keep $x_i$ instead of $\ov{x_i}$).

Since  $\K$ is algebraically closed it is plausible 
to speak about the continuity of the roots. 

The algebraic form of Local B\'ezout Theorem (Theorem \ref{bezloc}) says that there are finitely many zeroes of $(f_1,\ldots,f_n)$
 above the residual zero $(0,\ldots,0)$ (i.e., with coordinates 
 in $\fm$), and the sum of their multiplicities
 equals the dimension $r$ of $\overline{\gC}$ as $\gk$-vector space,
i.e., the multiplicity of the residual zero. 
This implies also that the \Klg $\gC\otimes_\gA\gK$ is finite free of rank $r$.

%\smallskip%This theorem implies that, when one starts with a system having a strongly isolated zero 
%$(\uxi)$ at finite distance
%(i.e., the $\xi_i$'s are in $\gA$ and there is no other zero in the infinitesimal neighborhood $(\xi_1+\fm,\dots,\xi_n+\fm)$
%of $(\uxi)$), after an infinitesimal perturbation, the
%system of equations remains ``zero-dimensional in the infinitesimal neighborhood
%of $(\uxi)$, with the same multilocal multiplicity''; in other words, the zeroes inside this neighborhood remain isolated zeroes and the sum of multiplicities does not change. 

\smallskip As application of the precedent sections,  in section~\ref{classical},  we obtain, from the abstract theorem, the classical one. Here the characterization of  border bases introduced by Bernard Mourrain in \cite{Mou99} reveals to be a crucial tool. 

Our proof shows, (very much in the spirit of \cite[Eisermann]{Eis2012})  that this classical result is also valid for the case of the field $\gR(i)$ (instead of ${\mathbb C}$), where $\gR$ is any real closed field and we consider in $\gR(i)\cong \gR^2$ the euclidean topology.  

% Although it could be plausible, we do not know how
% to obtain this result  by transfer using elimination of quantifiers  
% from the same result for the reals. 

%%%%%%%%%%%%%%%%%%%%%%%%%%%%%%%%%%%%%%%%%%%%%%%%%%%%%%%%%%%%%%%%%%%%
\section{Useful tools}\label{secUseful}

%%%%%%%%%%%%%%%%%%%%%%%%%%%%%%%%%%%%%%%%%%%%%%%%%%%%%%%%%%%%%%%%%%%%
\subsection{Theorem of de Smit and Lenstra}
\begin{theorem} \label{thdeSmitLenstra} 
 \emph{(\Tho of de Smit {\&} Lenstra, flatness, \cite{SmLe97})}
\\
Let $\gA$ be an arbitrary commutative ring. If an \Alg 
$$\gB=\AXn/\!\gen{f_1,\dots,f_n}$$ 
is finite, then it is flat (so, it is free if $\gA$ is local).
\end{theorem}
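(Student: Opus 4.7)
The plan is to reduce to a Noetherian local $\gA$, exploit that the presentation uses exactly $n$ equations in $n$ variables (a complete-intersection-like condition), conclude that $f_1,\ldots,f_n$ form a regular sequence in $\gA[\uX]$, and then read off flatness from the Koszul complex.

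First I would perform a Noetherian reduction. Let $\gA_0 \subseteq \gA$ be the subring generated by the coefficients of $f_1,\ldots,f_n$ together with the coefficients of monic univariate polynomials witnessing the integrality of each $X_i \bmod \gen{f_1,\ldots,f_n}$ over $\gA_0$ (such polynomials exist precisely because $\gB$ is finite over $\gA$). Then $\gA_0$ is a finitely generated $\ZZ$-algebra, hence Noetherian, and setting $\gB_0 := \gA_0[\uX]/\!\gen{f_1,\ldots,f_n}$ one has $\gB = \gB_0 \otimes_{\gA_0} \gA$. Flatness being preserved under base change, it suffices to establish the result for $\gA_0$. By the local criterion of flatness, I may then localize at an arbitrary prime of $\gA_0$ and assume $\gA$ itself is Noetherian local of dimension $d$.

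Next comes the key dimension count: the ring $\gA[\uX]$ has Krull dimension $d+n$ and $I := \gen{f_1,\ldots,f_n}$ is generated by $n$ elements, so by Krull's height theorem every minimal prime of $I$ has height at most $n$, whence $\dim \gB \geq d$. On the other hand, finiteness of $\gB$ over $\gA$ forces $\dim \gB \leq d$. Thus every minimal prime of $I$ has height exactly $n$, i.e.\ $I$ has the expected codimension. Using this height information (and, when $\gA$ is not already Cohen--Macaulay, passing through a faithfully flat Cohen--Macaulay cover so that Macaulay's unmixedness theorem applies to $\gA[\uX]$) one concludes that $(f_1,\ldots,f_n)$ is a regular sequence in $\gA[\uX]$.

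The Koszul complex $K_\bullet(f_1,\ldots,f_n;\gA[\uX])$ is then a free resolution of $\gB$ over $\gA[\uX]$; since $\gA[\uX]$ is a free $\gA$-module, this resolution also consists of free $\gA$-modules. Computing $\mathrm{Tor}^{\gA}_i(\gB,M)$ through it yields zero for $i\geq 1$ and every $\gA$-module $M$, which is exactly flatness of $\gB$ over $\gA$; in the local case, flatness plus finite generation gives freeness. The main obstacle is the regular sequence conclusion when $\gA$ is not Cohen--Macaulay: the detour via a faithfully flat Cohen--Macaulay extension is classical but nontrivial, and it is precisely this point that the constructive proof in \cite{lq15} must handle differently, presumably by replacing the detour with a direct, effective Koszul-homology computation.
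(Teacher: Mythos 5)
The paper does not actually prove Theorem~\ref{thdeSmitLenstra}: it is quoted from de Smit and Lenstra~\cite{SmLe97}, and for a constructive proof the reader is sent to~\cite{lq15}. So there is no in-text argument to measure yours against; I can only compare against those references and against the internal logic of your own attempt, and on the latter count there are two genuine gaps, the second of which is fatal as written.

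The dimension count is a good instinct, but you apply it in the wrong ring and then reach for the wrong tool. For an arbitrary Noetherian local $\gA$, the existence of a faithfully flat Cohen--Macaulay $\gA$-algebra is not a ``classical but nontrivial detour'': in that generality it is the big Cohen--Macaulay algebra theorem (Hochster in equal characteristic, Andr\'e in mixed characteristic), and such algebras are typically non-Noetherian, so Macaulay's unmixedness theorem as you invoke it is not available on them; a Noetherian, faithfully flat CM cover is the small CM module problem, which is open. None of this is needed, because the Cohen--Macaulay property that actually enters is that of the closed fiber $\gk[\uX]$, which is CM for entirely elementary reasons. The base $\gA$ and the ring $\gA[\uX]$ play no CM role at all.

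The more fundamental problem is the Koszul step. Even granting that $f_1,\dots,f_n$ is a regular sequence in $\gA[\uX]$, so that $K_\bullet(\underline f;\gA[\uX])$ is an $\gA$-flat resolution of $\gB$ and therefore $\mathrm{Tor}^{\gA}_i(\gB,M)=H_i\bigl(K_\bullet\otimes_\gA M\bigr)$, the complex $K_\bullet\otimes_\gA M$ is the Koszul complex of $\underline f$ acting on $M[\uX]$, and regularity of $\underline f$ on $\gA[\uX]$ gives \emph{no} control on its homology over $M[\uX]$. A resolution by free $\gA$-modules does not force the derived tensor product to be concentrated in degree zero --- every module has such a resolution. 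Asserting $H_i\bigl(K_\bullet\otimes_\gA M\bigr)=0$ for all $M$ and $i\geq1$ is precisely asserting that $\gB$ is $\gA$-flat, i.e.\ the conclusion; the step is a restatement, not a deduction, and the finiteness hypothesis (which is where the real content lives) is never used here.

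Both gaps close at once with the local criterion for flatness by fibers (Bourbaki, AC~III, \S~5; Matsumura, \emph{Commutative Ring Theory}, Thm.~22.6). Take any maximal ideal $\mathfrak q$ of $\gB$; since $\gB$ is finite over the local ring $\gA$, $\mathfrak q$ lies over $\fm$. Let $\mathfrak Q\subset\gA[\uX]$ be its preimage and $\gC=\gA[\uX]_{\mathfrak Q}$, a Noetherian local ring flat over $\gA$. Because $\ov\gB=\gk[\uX]/\gen{\ov{f_1},\dots,\ov{f_n}}$ is finite over $\gk$, the ideal $\gen{\ov{f_1},\dots,\ov{f_n}}$ has height $n$ in the $n$-dimensional Cohen--Macaulay ring $\gk[\uX]$, so $\ov{f_1},\dots,\ov{f_n}$ is a regular sequence in the fiber $\gC/\fm\gC=\gk[\uX]_{\ov{\mathfrak Q}}$. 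The criterion then delivers \emph{simultaneously} that $f_1,\dots,f_n$ is a regular sequence in $\gC$ and that $\gC/\gen{\underline f}\gC=\gB_{\mathfrak q}$ is $\gA$-flat; flatness of $\gB$ follows as $\mathfrak q$ runs over the maximal ideals. This is the theorem your Koszul paragraph was tacitly assuming. Two smaller remarks: in the Noetherian approximation you must also adjoin to $\gA_0$ the coefficients of the polynomials witnessing $p_i(X_i)\in\gen{f_1,\dots,f_n}$, else $\gB_0$ need not be finite over $\gA_0$; and the constructive proof in~\cite{lq15}, which must avoid prime ideals, Noetherianity and completions altogether, proceeds along a genuinely different route than the one you guess in your final sentence.
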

%--------- fin theorem -------------------------------------------- 
%

A constructive proof due to Claude Quitté  is explained in \cite{lq15}.

%%%%%%%%%%%%%
\subsection{Border bases}
\label{subsecBB} 

In this subsection, $\gA$ is an arbitrary commutative ring.

%:2.2.1
\medskip   \noindent {\bf (2.2.1)} 
In the sequel we shall identify the semi-groups  $X_1^\NN\cdots X_n^\NN$  and $\NN^n$.
  A nonempty finite subset $\cB\subset \NN^n$ is called 
 \emph{closed by division} if 
% $1\in \cB$ and 
 for every $X^{\gamma}$, $X^{\gamma'}$, if $X^{\gamma} \in \mathcal{B}$ and
 $X^{\gamma'} \vert\; X^{\gamma}$ then $X^{\gamma'} \in \mathcal{B}$.

\noindent  In the sequel any finite subset $\NN^n$ denoted by $\cB$ will be assumed nonempty and closed by division.

\noindent  We call {\it border of} $\mathcal{B}$ the following finite subset of 
 $\NN^n$,
 $$\pcB:=
 (X_1\cB \,\cup\, \cdots \,\cup\, X_n \cB) \setminus \cB $$

%:2.2.2
\medskip    \noindent {\bf (2.2.2)} Let $\gA$ be  a  ring, $I$ a finitely generated ideal of $\AXn=\AuX$ and 
$$\gB:=\AuX/I=\gA'[\xn]
$$  
($x_i$  the image of $X_i$ in $\gB$ and $\gA'$ the image of $\gA$). 
 Given a finite subset $\cB \subset \NN^n$ as above,
 we call \emph{rewriting rules for $\gB$ w.r.t.\ $\cB$} a set of equalities in $\gB$
 as follows,
%  equation label {eqRewritingRule}
\begin{equation} \label {eqRewritingRule}
\so{\,x^{\beta}=\som_{\alpha\in \cB} h_{\beta,\alpha}x^{\alpha}:
 \beta \in \pcB\,} \hbox{ where }h_{\beta,\alpha}\in \gA
\end{equation}
%  end-equation
Formally, we define the rewriting rules as being the polynomials $h_\beta(\uX)=X^\beta-\som_{\alpha\in \cB} h_{\beta,\alpha}x^{\alpha}$.
The equalities $(\ref{eqRewritingRule})$ mean precisely that the $h_\beta(\uX)$'s belong to the ideal $I$.
 
\noindent  If $\gB$ defined as above has  rewriting rules w.r.t.\ to $\cB$, then 
 the set~$\{\,x^{\alpha}: \alpha\in \cB  \,\} $ 
 generate $\gB$ as \Amoz. In  particular $\gB$  is a finite \Amoz.

%:2.2.3
\medskip    \noindent {\bf (2.2.3)} We call $(\cB,(h_\beta)_{\beta\in \pcB})$ a {\it border basis}
 of $\gB/\gA$ if 
\begin{itemize}
\item the $h_\beta(\uX)$'s are  rewriting rules in $\gB$ w.r.t. $\cB$ and
\item  $(x^\alpha)_{\alpha\in \cB}$ is a basis of $\gB$ as an \Amoz.
\end{itemize}
This implies in particular that $\gA'=\gA$.

\noindent Given any morphism $\rho:\gA\to\gC$ of  rings a border basis
$(\cB,(h_\beta)_{\beta\in \pcB})$ of $\gB/\gA$ is transformed by~$\rho$ in a border basis
$(\cB,(\rho(h_\beta))_{\beta\in \pcB})$ of $(\gB\otimes_\gA\!\gC)/\gC$.  
 
%:2.2.4
\medskip    \noindent {\bf (2.2.4)} Given  $\gA$  
 and $\cB$ as above, assume be given a set of ``abstract'' rewriting rules (\ref{eqRewritingRule}) (without $\gB$). 
Define  
$I:=\gen {X^\beta-\sum_{\alpha \in \cB} h_{\beta,\alpha}X^{\alpha}:  
\beta \in \partial{\cB}}$ and $\gB:=\AuX/I$. 
We can ask 
wether  $\gB$ is a free \Amo of basis $\so{x^{\alpha}: \alpha\in \cB}$.
I.e., do we get a border basis of $\gB$? For this,  let us denote by 
$\Span_{\gA}( \cB)$ the free \Amo generated by the terms of  $\cB$, and  for every $i$
$$
\mu_{X_i}: \Span_{\gA}( \cB) \rightarrow \Span_{\gA}( \cB) 
$$ 
the linear map given by
$$
\mu_{X_i}(X^{\alpha })=
\formule{ 
X_iX^{\alpha } \qquad \quad \quad \hbox{ if } X_iX^{\alpha}\in \cB,
\\[.3em]
\sum_{\alpha \in \cB} h_{\beta,\alpha}X^{\alpha} \quad \hbox{ if }
X_iX^\alpha=X^\beta\in \pcB. 
}
$$
 Now let  $\Lambda_i$ denote
 the matrix of $\mu_{X_i}$ 
 w.r.t. the basis $ \cB $ of the free \Amo $\Span_{\gA}( \cB)$, then, see \cite{Bra} and \cite{Mou99}

%:     theorem thBBR
\begin{theorem} \label{thBBR}
$\gB$ is a  free \Amo with basis 
$\so{x^{\alpha}: \alpha\in \cB}$  iff
$$
\Lambda_i\Lambda_j=\Lambda_j\Lambda_i \;\hbox{ for }i,j=1,\dots,n.
$$
\end{theorem}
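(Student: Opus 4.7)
The plan is to handle the two implications separately: the forward one is essentially tautological, while the reverse direction is where the work lies — one uses the commuting matrices to put an $\AuX$-module structure on the free \Amo $M:=\Span_{\gA}(\cB)$, and then identifies that module with $\gB$. For the forward direction, assume $\gB$ is $\gA$-free on $\{x^\alpha : \alpha\in\cB\}$. Then for each $i$ the endomorphism ``multiplication by $x_i$'' of $\gB$ has matrix $\Lambda_i$ in this basis: indeed for $\alpha\in\cB$, either $X_iX^\alpha\in\cB$ and $x_ix^\alpha=x^{\alpha+e_i}$ is the next basis element, or $X_iX^\alpha=X^\beta\in\pcB$ and the rewriting rule $h_\beta\in I$ forces $x_ix^\alpha=\sum_{\alpha'\in\cB}h_{\beta,\alpha'}x^{\alpha'}$. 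Since the multiplications by $x_i$ and $x_j$ commute in $\gB$, so do $\Lambda_i$ and $\Lambda_j$.

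For the reverse direction, assume $\Lambda_i\Lambda_j=\Lambda_j\Lambda_i$ for all $i,j$. By commutativity the assignment $X_i\mapsto\Lambda_i$ extends to an $\gA$-algebra morphism $\varphi:\AuX\to\mathrm{End}_{\gA}(M)$, equipping $M$ with the structure of an $\AuX$-module, which we denote $M_\Lambda$. Let $\pi:\AuX\to M_\Lambda$ be the $\AuX$-linear map $P\mapsto\varphi(P)(X^{\uze})$. Since $\cB$ is closed by division, every nonzero $\alpha\in\cB$ can be written $\alpha'+e_i$ with $\alpha'\in\cB$; a short induction on $|\alpha|$ using the first branch of the definition of $\Lambda_i$ then gives $\pi(X^\alpha)=X^\alpha$ for every $\alpha\in\cB$. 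For $\beta\in\pcB$, pick $\gamma\in\cB$ and $i$ with $\beta=\gamma+e_i$; the second branch of the definition of $\Lambda_i$ yields $\pi(X^\beta)=\Lambda_i(X^\gamma)=\sum_{\alpha\in\cB}h_{\beta,\alpha}X^\alpha$, and combining with the previous identity one gets $\pi(h_\beta)=0$. $\AuX$-linearity of $\pi$ then implies $\pi(I)=0$, so $\pi$ factors through an $\gA$-linear map $\bar\pi:\gB\to M$.

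To conclude, consider the $\gA$-linear surjection $\sigma:M\to\gB$ defined by $X^\alpha\mapsto x^\alpha$, which is surjective by (2.2.2). The previous computations yield $\sigma\circ\bar\pi=\Id_\gB$ on the generating family $\{x^\alpha : \alpha\in\cB\}$ and $\bar\pi\circ\sigma=\Id_M$ on the basis $\{X^\alpha : \alpha\in\cB\}$, so $\sigma$ is an $\gA$-module isomorphism and $\{x^\alpha : \alpha\in\cB\}$ is an $\gA$-basis of $\gB$. The main obstacle, and the only place the commutativity hypothesis really enters, is the existence of the ring morphism $\varphi$; equivalently, one must know that $\pi(X^\beta)$ does not depend on the order in which the factors $e_{i_1},\ldots,e_{i_k}$ are added to build $\beta$, so that the ``path through $\cB$'' used above is legitimate. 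Once $\varphi$ is in hand, the factorisation through $I$ and the inverse construction are routine module-theoretic bookkeeping.
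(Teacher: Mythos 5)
The paper does not supply a proof of Theorem~\ref{thBBR}; it only cites \cite{Bra} and \cite{Mou99}. So your proof is not being compared against an in-paper argument but against the cited references, and as a self-contained argument it is correct. The decisive point is exactly where you place it: the commutation relations are what allow $X_i\mapsto\Lambda_i$ to extend to a ring morphism $\varphi:\AuX\to\mathrm{End}_{\gA}(M)$, turning $M=\Span_{\gA}(\cB)$ into an $\AuX$-module; without commutativity the ``reduction'' $\pi$ would not be well defined on monomials outside $\cB\cup\pcB$. Your verification that $\pi(X^\alpha)=X^\alpha$ for $\alpha\in\cB$ (using that $\cB$ is nonempty and closed by division, so in particular $1\in\cB$ and each nonzero $\alpha\in\cB$ has some $\alpha-e_i\in\cB$) and that $\pi(h_\beta)=0$ for $\beta\in\pcB$ (using $\pcB\subseteq\bigcup_i X_i\cB$) is sound, and the pair of mutually inverse $\gA$-linear maps $\sigma:M\to\gB$, $\bar\pi:\gB\to M$ then gives the freeness with the desired basis. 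The forward direction is, as you say, a tautology: in the free case $\Lambda_i$ is literally the matrix of multiplication by $x_i$, and these commute because $\gB$ is commutative. This is the same mechanism as in Mourrain's characterization (and it is the reason border bases specialize well, which the paper exploits in (2.2.3)), so while the paper itself offers no proof, your argument is the one the cited sources have in mind. One cosmetic point: rather than asserting $\sigma\circ\bar\pi=\Id_\gB$ ``on the generating family'' and $\bar\pi\circ\sigma=\Id_M$ ``on the basis,'' it is slightly cleaner to observe that $\bar\pi\circ\sigma=\Id_M$ on the basis of $M$ already forces $\sigma$ to be injective, and combined with the surjectivity of $\sigma$ from (2.2.2) this gives the isomorphism directly; but this is a matter of economy, not correctness.
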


%:2.2.5
\medskip   \noindent {\bf (2.2.5)} In the particular case in which $\gA$ is a discrete field $\gk$ and $\gB$ is an Artinian 
\klgz,  the Gröbner basis algorithm provides a border basis of $\gB$. Indeed, it is well known that, w.r.t.\ to an admissible
monomial ordering, the monomials ``under the staircase'' form a $\gk$-basis $\cB$ of $\gB$, and the rewriting rules
are given by computing the remainder of the division of $X^\beta$ by the Gröbner basis (for~$\beta\in\pcB$).
 Mourrain in \cite{Mou99} introduced an algorithm to compute a border basis of an Artinian  \klg 
without using the theory of  Gröbner basis and requiring for $\cB$ a weaker property 
  than to be closed by division.
%, 
%, namely to be  {\it conexe to one}. We say that  the finite set $\cB$ is {\it conexe to one} if 
%$ 1\in \cB$, and for every $x^{\gamma}\in \cB$ there exists 
%$i$ s.t. $\frac{X^{\gamma}}{X_i} \in \cB$.

%\smallskip\noindent \hum{pour le moment je ne vois pas l'utilité de préciser les raffinements avec connex to~1}

%:2.2.6
\medskip    \noindent {\bf (2.2.6)}  One  important fact  with border bases  is that they are more suitable  
 for computational purposes specially when data is given by approximate values.  
 Another one is that they can be useful in  more general cases when Gröbner bases over rings are not available or not easy to manage.

\section{The Bézout local theorem}

Let us recall two theorems in \cite{AL2010}.

%:   theorem  bezloc0
\begin{theorem}\label{Mather} \emph{(\cite[Theorem 1]{AL2010})}
 Let $(\A,\fm,\gk)$ be a  local Henselian ring.
 \\
 Let $f_1, \ldots, f_m \in \AXn=\AuX$,   $\ov{f_1},\ldots,\ov{f_m}$ their images in $\kuX$,  
$$
\gB:=\AuX/\!\gen{f_1, \ldots, f_m}=\Axn
$$ 

\vspace{-1em}
\noindent and 
$$
\ov{\gB}:=\gB/\fm\gB=\kXn/ \!\gen{\ov{f_1},\ldots,\ov{f_m}}=\gk[\ov{x_1},\dots,\ov{x_n}].
$$ 
Let $\gB_1:=%\Aux_{\fm+\gen{\ux}}=
\gB_{\fm+\gen{\ux}}$, $\ov{\gB_1}:=\ov\gB_{\gen{\ov{x_1},\dots,\ov{x_n}}}$ and assume $1 \leq \dim_\gk(\ov{\gB_1})<\infty$.  
\\
Then  the \Alg
$\gB_1$ is a finitely generated \Amoz.
\end{theorem}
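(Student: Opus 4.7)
The plan is to produce a border-basis \Alg $\gB''$ that is free of finite rank over $\A$ and surjects onto $\gB_1$; the combinatorial structure of $\gB''$ is extracted from the residual algebra $\ov{\gB_1}$, and the Henselian structure of $\A$ is used to lift it to $\A$.

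\smallskip

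In the Artinian local \klg $\ov{\gB_1}$, the Gröbner-basis construction of item~(2.2.5) (for any admissible monomial order) produces a finite subset $\cB\subset\NN^n$ closed by division such that $\so{\ov{x^\alpha}:\alpha\in\cB}$ is a \kev basis of $\ov{\gB_1}$, together with residual rewriting rules $\ov{x^\beta}=\som_{\alpha\in\cB}\ov{h_{\beta,\alpha}}\ov{x^\alpha}$ for $\beta\in\pcB$; by Theorem~\ref{thBBR} the corresponding multiplication matrices $\ov{\Lambda_i}$ on $\Span_\gk(\cB)$ commute pairwise.

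\smallskip

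The crucial step is to find lifts $h_{\beta,\alpha}\in\A$ of the $\ov{h_{\beta,\alpha}}$ such that (a) the lifted matrices $\Lambda_i$ on $\Span_\A(\cB)$ commute and (b) each polynomial $X^\beta-\som_\alpha h_{\beta,\alpha}X^\alpha$ belongs to the ideal $\gen{f_1,\ldots,f_m}$ after localization at $\fm+\gen{\uX}$, i.e.\ vanishes in $\gB_1$. The residual version of this system is solved by Step~1; the Henselian hypothesis on $\A$, invoked through the MHL of \cite{ACL2014}, lifts the solution to $\A$. The isolation of the residual zero (guaranteed by the zero-dimensionality of $\ov{\gB_1}$) plays here the role of the simple-root hypothesis in the classical one-variable Hensel's lemma.

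\smallskip

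Given such lifts, condition (a) together with Theorem~\ref{thBBR} shows that $\gB''\assign\AuX/\gen{X^\beta-\som_\alpha h_{\beta,\alpha}X^\alpha:\beta\in\pcB}$ is a free \Alg with basis $\so{x^\alpha:\alpha\in\cB}$; condition (b) then produces an \Algz-surjection $\gB''\twoheadrightarrow\gB_1$, so $\gB_1$ is a \tf \Amoz. The principal difficulty is the simultaneous Henselian lifting of (a) and (b): it is a genuinely multivariate Hensel-type problem that cannot be reduced to the classical one-variable statement, and for which the MHL of \cite{ACL2014}—tailored for isolated residual configurations that are not necessarily simple zeros—is exactly the tool needed.
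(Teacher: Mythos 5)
Your overall shape---extract a border basis from $\ov{\gB_1}$, lift its rewriting rules over $\A$ using the Henselian hypothesis, and exhibit $\gB_1$ as a quotient of a finitely generated $\A$-module---is the strategy the paper follows (through the proof of Theorem~\ref{bezloc1}, item~1, which explicitly reproduces the construction of \cite[Theorem 1]{AL2010}). But the step you yourself flag as ``crucial'' is exactly where your argument breaks, and the break is not a missing detail: it is a misreading of what the Multivariate Hensel Lemma does.

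MHL, including the constructive version in \cite{ACL2014}, lifts a residual zero only when it is a \emph{simple} isolated zero (Jacobian a unit mod $\fm$); isolation alone does not suffice---think of $X^2-a$ over a Henselian DVR with $a\in\fm$ not a square. The residual zero $(\uze)$ of $\ov{f_1},\dots,\ov{f_m}$ has multiplicity $r\geq 1$ and is in general not simple, so ``the zero-dimensionality of $\ov{\gB_1}$'' cannot ``play the role of the simple-root hypothesis,'' and the claim that the MHL of \cite{ACL2014} is ``tailored for isolated residual configurations that are not necessarily simple zeros'' is simply false. The technical heart of the paper's argument (from \cite{AL2010,ABM}) is precisely the device that turns the non-simple problem into a simple one: choose lifts $H_\beta=\sum_i p_{i,\beta}f_i\in\gen{f_1,\dots,f_m}$ of the residual rewriting rules, perform a formal division of $H_\beta$ by abstract rules $\wi{h_\beta}=X^\beta-\sum_\alpha\wi{u_{\beta,\alpha}}X^\alpha$ with indeterminate coefficients, and let $\wi{R_{\beta,\alpha}}$ be the resulting remainders. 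The system $\{\wi{R_{\beta,\alpha}}=0\}$ is a genuine polynomial system in the unknowns $\wi{u_{\beta,\alpha}}$, and the nontrivial point proved in \cite{AL2010,ABM} is that $(u^0_{\beta,\alpha})$ is a \emph{simple} residual zero of it. MHL applies to \emph{this auxiliary} system, not to the original one. Your condition~(b), ``belongs to the ideal after localization,'' is a membership assertion, not a system of equations in the $h_{\beta,\alpha}$'s, so there is no system on which to invoke MHL at all.

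Two smaller remarks. First, your condition~(a) (commutation of the lifted $\Lambda_i$, to get $\gB''$ free via Theorem~\ref{thBBR}) is not needed for Theorem~\ref{Mather}: to conclude finite generation it is enough that the lifted rules lie in the (localized) ideal, so that $\Span_\A(\cB)$ surjects onto $\gB[1/s]=\gB_{\fm+\gen{\ux}}$ after a Nakayama step, and the paper never enforces commutation at this stage. Freeness is established only in the complete-intersection case $m=n$, and there by the de~Smit--Lenstra theorem, not by the commutation criterion. Second, even if one tried to lift the combined system (a)$+$(b), the residual solution would still have to be shown simple for the enlarged system, which you neither state nor attempt.
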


%:   theorem  bezloc0
\begin{theorem}\label{bezloc0} \emph{(\cite[Theorem 5]{AL2010})}
 Let $(\A,\fm,\gk)$ be a  Henselian DVR.\\
 Let $f_1, \ldots, f_n \in \AXn=\AuX$,   $\ov{f_1},\ldots,\ov{f_n}$ their images in $\kuX$,  
$$\gB:=\AuX/\!\gen{f_1, \ldots, f_n}=\Axn$$ 

\vspace{-1em}
\noindent and 
$$
\ov{\gB}:=\gB/\fm\gB=\kXn/ \!\gen{\ov{f_1},\ldots,\ov{f_n}}=\gk[\ov{x_1},\dots,\ov{x_n}].
$$ 
Let $\gB_1:=\gB_{\fm+\gen{\ux}}$, $\ov{\gB_1}:=\ov\gB_{\gen{\ov{x_1},\dots,\ov{x_n}}}$ and assume $\dim_\gk(\ov{\gB_1})=r\geq 1$.  
\\
Then  the \Alg
$\gB_1$
 is a free \Amo of rank $r$, whose basis is given by lifting any $\gk$-basis of~$\ov \gB_1 $.
\end{theorem}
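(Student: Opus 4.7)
By Theorem~\ref{Mather}, applied with $m=n$, $\gB_1$ is a finitely generated $\gA$-module. Since $\gA$ is a DVR (hence a PID), a finitely generated $\gA$-module is free if and only if it is torsion-free, in which case its rank equals $\dim_\gk(\gB_1/\fm\gB_1) = \dim_\gk(\ov{\gB_1}) = r$. Moreover, by Nakayama any lift of a $\gk$-basis of $\ov{\gB_1}$ generates $\gB_1$ over $\gA$, and once freeness is established this generating set is automatically a basis. The whole theorem therefore reduces to showing that $\gB_1$ is $\gA$-torsion-free.

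To prove torsion-freeness, the plan is to build an auxiliary $\gA$-algebra satisfying the hypotheses of Theorem~\ref{thdeSmitLenstra}. First I would choose a monomial basis $\cB \subset \NN^n$ of $\ov{\gB_1}$ closed by division (obtained from the Gr\"obner basis of $\gen{\ov f_1,\ldots,\ov f_n}$ at the origin), and lift a border basis $(\cB,(\ov h_\beta)_{\beta\in\pcB})$ of $\ov{\gB_1}$ to polynomials $h_\beta \in \AuX$. Using the Henselian hypothesis on $\gA$ --- in particular an abstract multivariate Weierstrass division, in the spirit of the introduction --- I would then produce $n$ polynomials $g_1,\ldots,g_n \in \AuX$ (controlled perturbations of the $f_i$ modulo the relations $h_\beta$) such that $\gE := \AuX/\gen{g_1,\ldots,g_n}$ is finite as $\gA$-module and its localization at $\fm+\gen{\ux}$ recovers $\gB_1$. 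Theorem~\ref{thdeSmitLenstra} then immediately gives that $\gE$ is free over $\gA$. Since $\gA$ is Henselian and $\gE$ is a finite $\gA$-algebra, $\gE$ decomposes as a product of local $\gA$-algebras lifting the decomposition of $\ov{\gE}$ into local Artinian $\gk$-factors; each factor is a direct summand of $\gE$ and hence $\gA$-free. The factor corresponding to the residual origin is $\gB_1$, which is therefore free of rank $r$, with an explicit basis $\{x^\alpha : \alpha \in \cB\}$ (the border-basis structure of Theorem~\ref{thBBR} being inherited from $\gE$).

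The main obstacle will be the construction of the system $(g_1,\ldots,g_n)$: one needs exactly $n$ polynomials (so that Theorem~\ref{thdeSmitLenstra} applies), finiteness of $\gE$ over $\gA$, and faithful local behavior at the origin. This is where Henselianity of $\gA$ should do the real work, via an abstract Weierstrass-type division. An alternative, more internal route would be to stay inside $\gB_1$ and directly adjust the lifts $h_\beta$ --- using a Newton iteration or the Multivariate Hensel Lemma --- so that the matrices $\Lambda_i$ of Theorem~\ref{thBBR} commute exactly over $\gA$; this would bypass the auxiliary $\gE$ entirely, but the obstruction theory for simultaneous commuting lifts of matrices is more delicate, which is why the de Smit--Lenstra detour seems preferable.
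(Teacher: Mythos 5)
Your opening reduction is a genuinely nice simplification of the problem for the DVR case: Theorem~\ref{Mather} gives finite generation, and over a DVR a finitely generated module is free of rank $r=\dim_\gk(\gB_1/\fm\gB_1)$ precisely when it is torsion-free, so the whole theorem does reduce to torsion-freeness (equivalently flatness). The paper does not make this reduction because it works over an arbitrary local Henselian ring, but in the DVR case it is a legitimate and clean observation. The problem is the second half of your argument, and you yourself put your finger exactly on the soft spot. The appeal to an \emph{abstract multivariate Weierstrass division} producing a new $n$-equation, $n$-variable system $\gE=\AuX/\gen{g_1,\dots,g_n}$ that is finite over $\gA$ and localizes to $\gB_1$ is not a tool that is available to you: the Mather-type statements one has in a Henselian setting give finiteness of a localization, not a fresh presentation by $n$ polynomials in $n$ variables; and Remark~\ref{rembezloc} of the paper shows that the Weierstrass and Mather division theorems are \emph{consequences} of the theorem you are trying to prove, so invoking such a division here is circular. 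Concretely, there is no reason the residual system $(\ov f_1,\dots,\ov f_n)$ can be perturbed, inside the ideal and over a possibly small residue field $\gk$, to a system with no zeroes outside the origin in an algebraic closure; that is exactly what a hypothetical ``$n$-equation $\gE$'' would require.

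The decisive trick in the paper is to \emph{not} change the $f_i$'s at all, but to add auxiliary variables. Theorem~\ref{bezloc1} lifts the rewriting rules $h_\beta$ of a border basis of the residual algebra via the Multivariate Hensel Lemma (the system solved is $\{\wi R_{\beta,\alpha}=0\}$, which expresses divisibility of the $H_\beta$'s by the formal $\wi h_{\beta'}$'s, \emph{not} commutativity of the multiplication matrices), and produces $S\in 1+\fm[\uX]$ so that the localization $\gB[1/s]=\gA[\uX,Z]/\gen{f_1,\dots,f_n,\,SZ-1}$ is a finitely generated $\gA$-module. In the proof of Theorem~\ref{bezloc} this is preceded by a further localization at an idempotent-lift $E(\ux)$ to reduce to the case $\ov\gB=\ov\gB_{\gen{\ov x_1,\dots,\ov x_n}}$, so that one ends with $\gA[\uX,T,Z]/\gen{f_1,\dots,f_n,E(T+1)-1,S(Z+1)-1}$, namely $n+2$ equations in $n+2$ variables. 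Theorem~\ref{thdeSmitLenstra} then applies \emph{verbatim} to this presentation, giving flatness (hence freeness over the local ring $\gA$), and the commutativity of the $\Lambda_i$'s of Theorem~\ref{thBBR} falls out \emph{a posteriori}, because the surjection $\Phi:\AuX/\gen{(h_\beta)}\to\gB[1/s]$ from an $r$-generated module onto a free module of rank $r$ is forced to be an isomorphism. Your ``alternative route'' of lifting the $\Lambda_i$'s to commuting matrices by Newton iteration is therefore not what the paper does, and you are right to be worried about it; but the primary route you sketch also does not close, because the needed $\gE$ is never constructed. Keep your DVR reduction as a remark, but for the actual torsion-freeness you should follow the paper's localization trick rather than hunt for an $n$-by-$n$ finite presentation.
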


We give first a result that implies a generalization and an important precision in Theorems  \ref{Mather} and \ref{bezloc0} in case of a residually finite global complete intersection ($m=n$ and the residual algebra is finite): see corollary \ref{superbezloc0}.

In the sequel we use the notation $\fm[\uX]$ for the ideal of $\AuX$
generated by $\fm$ (the polynomials with coefficients in $\fm$) and $\fm+\gen{\uX}$ for the ideal $\fm[\uX]+\gen{\uX}$
(the polynomials $\in\AuX$ with constant coefficient in $\fm$).

%:   theorem  bezloc1
\begin{theorem}\label{bezloc1}
 Let $(\gA,\fm,\gk)$ be a local Henselian ring, 
$\gB:= \Axn$ be a finitely generated \Alg  and set     
$\ov{\gB}:=\gB/\fm\gB=\gB\otimes_A \gk $  the residual algebra.
Assume that \smash{$\ov{\gB}$} is a nonzero finitely generated $\gk$-vector space. Then it holds:

\begin{enumerate}
 \item \noindent There exists $s \in 1+\fm \gB$ s.t.  the \Alg 
$\gB[1/s]$ is a finitely generated  \Amoz. More precisely, in case we know a border basis 
of $ \ov{\gB}$ as $\gk$-algebra, 
its rewriting rules can be lifted,
 to provide, for a suitable $ s \in 1+\fm \gB$, a system of generators of  $\gB[1/s]$ as an $\gA$-module. 

\item\noindent If in addition the \Alg $\gB$ is presented presented with an equal number of generators and 
relations, that is 
  $\gB:=\gA[X_1, \ldots,X_n]/\!\gen{f_1, \ldots, f_n}=\Axn$, % with
% $f_i \in \AuX$, 
and if \smash{$\dim_\gk(\ov{\gB})=r\geq 1$}, there exists $S\in 1+\fm[\uX]$ s.t.\ letting $s=S(\ux)$,  $\gB[1/s]$ is a free  \Amo  of rank $r$ 
 whose basis is given by lifting any $\gk$-basis of $\ov\gB$.
 In this case  any border basis of the residual algebra  $\ov\gB=\ov{\gB[1/s]}$ can be lifted
with its rewriting rules to a border basis of $\gB[1/s]$ with its
rewriting rules. 
\end{enumerate}
\end{theorem}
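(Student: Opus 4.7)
My plan starts with a border basis lift. Since $\overline{\gB}$ is a nonzero finite-dimensional $\gk$-algebra, item (2.2.5) gives a border basis $(\cB,(\overline{h_{\beta,\alpha}}))$ of $\overline{\gB}/\gk$ with $|\cB|=r=\dim_\gk\overline{\gB}$. I pick arbitrary lifts $h_{\beta,\alpha}\in\gA$ and form the polynomials $H_\beta(\uX)=X^\beta-\sum_{\alpha\in\cB}h_{\beta,\alpha}X^\alpha\in\AuX$, so that their evaluations $g_\beta:=H_\beta(\ux)\in\gB$ lie in $\fm\gB$ by construction. Writing $M=\sum_{\alpha\in\cB}\gA\cdot x^\alpha\subseteq\gB$, the residual border basis gives $\gB=M+\fm\gB$, and iterating the residual reductions shows $x^\gamma\in M+\fm^k\gB$ for every monomial $x^\gamma$ and every $k\geq 1$.

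\textbf{Part 1.} The goal is to collapse these successive approximations into a single localization. Using the Henselian hypothesis on $\gA$ --- concretely, the constructive Multivariate Hensel Lemma of \cite{ACL2014} --- I plan to produce $s\in 1+\fm\gB$ such that each $g_\beta$ becomes zero in $\gB[1/s]$. Once this is achieved, the lifted rewriting rules hold exactly in $\gB[1/s]$, and consequently $\{x^\alpha:\alpha\in\cB\}$ is a system of generators of $\gB[1/s]$ as an $\gA$-module, which is the content of Part 1.

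\textbf{Part 2.} Assume now $\gB=\AuX/\langle f_1,\ldots,f_n\rangle$. By Part 1 there is $s\in 1+\fm\gB$ with $\gB[1/s]$ finite over $\gA$; since $s-1\in\fm\gB$ is the evaluation of a polynomial with coefficients in $\fm$, I can write $s=S(\ux)$ with $S\in 1+\fm[\uX]$. Then
\[
\gB[1/s]\;=\;\gA[\uX,Y]\big/\bigl\langle f_1,\ldots,f_n,\;S(\uX)\,Y-1\bigr\rangle
\]
is an $\gA$-algebra with exactly $n+1$ generators and $n+1$ relations, and is finite over $\gA$. Theorem~\ref{thdeSmitLenstra} (de Smit--Lenstra) makes it $\gA$-flat, hence $\gA$-free since $\gA$ is local. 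Its rank is determined residually: $\gB[1/s]\otimes_\gA\gk=\overline{\gB}[1/\overline{s}]=\overline{\gB}$ because $\overline{s}=1$, so the rank equals $r$. Any $\gk$-basis of $\overline{\gB}$ then lifts to an $\gA$-basis of $\gB[1/s]$ by standard Nakayama; specialized to a border basis, the lifted monomials $\{x^\alpha:\alpha\in\cB\}$ form an $\gA$-basis. The multiplication matrices $\Lambda_i$ automatically commute inside the commutative algebra $\gB[1/s]$, so Theorem~\ref{thBBR} certifies that the lifted $(\cB,(H_\beta))$ is a genuine border basis of $\gB[1/s]$ with its rewriting rules.

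\textbf{Main obstacle.} I expect Part 1 to be the core difficulty: converting the modulo-$\fm$ information, in which generation by $\cB$ is automatic, into an \emph{exact} generation statement in a single explicit localization $\gB[1/s]$ with $s\in 1+\fm\gB$. Once this Henselian lift is produced, Part 2 follows cleanly from de Smit--Lenstra plus Nakayama.
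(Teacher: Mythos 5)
Your Part 1 has a genuine gap at the very first step. You write: \emph{``I pick arbitrary lifts $h_{\beta,\alpha}\in\gA$ and form $H_\beta(\uX)=X^\beta-\sum_\alpha h_{\beta,\alpha}X^\alpha$''}, and then you ``plan to produce $s\in 1+\fm\gB$ such that each $g_\beta=H_\beta(\ux)$ becomes zero in $\gB[1/s]$''. This cannot work for arbitrary lifts, because $g_\beta\in\fm\gB$ only tells you $g_\beta$ is in the Jacobson radical of $\gB$; any $s\in 1+\fm\gB$ is a unit modulo that radical, so inverting $s$ does not kill $g_\beta$ unless $g_\beta$ was already zero. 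Concretely, take $\gA=\ZZ_p$ and $\gB=\ZZ_p[X]/\gen{X^2-pX}$, a free $\gA$-module of rank $2$. Then $\ov\gB=\mathbb{F}_p[X]/\gen{X^2}$, with border basis $\cB=\{1,X\}$, $\pcB=\{X^2\}$, and rewriting rule $\ov{X^2}=0$; if you pick the ``arbitrary'' lift $h_{\beta,\alpha}=0$, you get $g_\beta=x^2=px$. For every $s\in 1+p\gB$ one has $s$ invertible in $\gB$ (since $p\gB$ is contained in the Jacobson radical of the finite $\gA$-module $\gB$), so $\gB[1/s]=\gB$ and $g_\beta=px\ne 0$ there. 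The only lift that works is $H_\beta=X^2-pX$, which is exactly the one you discarded by taking the lift to be arbitrary. So the theorem is not about finding a localization that makes an arbitrary lift vanish; it is about finding the \emph{right} lifted coefficients $u_{\beta,\alpha}\in\gA$.

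This is precisely where the paper's proof does its real work and where your plan is missing an idea. The paper does \emph{not} lift the coefficients arbitrarily: it starts from polynomials $H_\beta=\sum_i p_{i,\beta}f_i$ in the ideal $I$ with $\ov{H_\beta}=h_\beta^0$, introduces \emph{formal} rewriting coefficients $\wi{u_{\beta,\alpha}}$ as unknowns, performs a Weierstrass-style division $H_\beta=\sum_{\beta'}\wi{Q_{\beta,\beta'}}\wi{h_{\beta'}}+\wi{R_\beta}$, and then observes that $(u^0_{\beta,\alpha})$ is a residually simple isolated zero of the resulting system $\{\wi{R_{\beta,\alpha}}=0\}$. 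It is to \emph{that} system that the Multivariate Hensel Lemma is applied, producing the correct values $u_{\beta,\alpha}\in\gA$; the localizing element $s$ then falls out as $S(\ux)$ with $S=\det(Q_{\beta,\beta'})\in 1+\fm[\uX]$, and Cramer's rule is what puts the $h_\beta$'s into the ideal $\gen{f_1,\dots,f_m,SZ-1}$. Your proposal invokes the MHL but does not say what system it is solved against, and its target (``produce $s$'') is the wrong unknown: the MHL determines the coefficients $u_{\beta,\alpha}$, and $s$ is a byproduct. Once that step is in place, your Part 2 (reduction to $n{+}1$ equations in $n{+}1$ variables, de Smit--Lenstra, Nakayama rank count) is indeed the same argument as the paper's and is fine, except that the final claim about lifting the rewriting rules again depends on having chosen the lifts via the Henselian step rather than arbitrarily. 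A smaller omission: Part 1 of the theorem only assumes $\gB$ finitely generated, so the paper first replaces $\gB$ by a finitely presented surrogate before a border basis of $\ov\gB$ is even available; you should do the same.
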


\noindent \emph{Note}.  Note that in order to be able to compute a $\gk$-basis of $\ov\gB$, we need a priori that $\gB$ be finitely presented as an \Algz. So, in item \emph{1} we do not assume that $\ov\gB$ has a $\gk$-basis).

\smallskip An intuitive meaning of the first part of the statement of the theorem is that, when $\gA$ is a domain with quotient field $\gK$ and the polynomial system is 
residually zero-dimensional, inverting $s$ ``maps at infinity'' all zeroes of $I$ in $\gK$  which are not integral
over $\gA$. We explain this intuition through  Corollary \ref{corbezloc1}. 

%:     corollary  corbezloc1
\begin{corollary} \label{corbezloc1}
Same hypotheses and notations as in Theorem \ref{bezloc1} 1.
Assume moreover that $\gB:= \AXn/\!\gen{f_1,\dots,f_m}=\Axn$ and 
we  are  given an 
\Alg $\rho:\gA\to\gE$ and a zero $(\xi_1,\dots,\xi_n,\zeta)$ of $\{f_1,\dots,f_m,ZS(\uX)-1\}$ in $\gE$.
Then each $\xi_i$ is integral over $\rho(\gA)$.
\end{corollary}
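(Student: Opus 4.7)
The plan is to use the extra Rabinowitsch-style equation $ZS(\uX)-1$ to reduce to evaluating the localized ring $\gB[1/s]$ provided by Theorem~\ref{bezloc1}~1, and then invoke the standard fact that any element of a subring which is finitely generated as a module over a base ring is integral over that base.

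First, I would consider the evaluation morphism $\varphi\colon \AXn \to \gE$ defined by $X_i\mapsto \xi_i$ (with scalars acting through $\rho$). Since $(\xi_1,\ldots,\xi_n)$ is a zero of $f_1,\ldots,f_m$, each $f_j$ maps to $f_j(\xi_1,\ldots,\xi_n)=0$, so $\varphi$ factors through $\gB=\AXn/\gen{f_1,\ldots,f_m}$, producing a ring homomorphism $\bar\varphi\colon\gB\to\gE$ with $\bar\varphi(x_i)=\xi_i$. The final equation $\zeta\,S(\xi_1,\ldots,\xi_n)=1$ says precisely that $\bar\varphi(s)=S(\xi_1,\ldots,\xi_n)$ is invertible in $\gE$, with inverse $\zeta$. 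By the universal property of localization, $\bar\varphi$ therefore extends uniquely to a ring homomorphism $\psi\colon \gB[1/s]\to\gE$, still satisfying $\psi(x_i)=\xi_i$.

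Now Theorem~\ref{bezloc1}~1 tells us that $\gB[1/s]$ is a finitely generated $\gA$-module. Consequently the image $\gF\assign\psi(\gB[1/s])\subseteq\gE$ is a $\rho(\gA)$-subalgebra of $\gE$ that is finitely generated as a $\rho(\gA)$-module, and it contains every $\xi_i=\psi(x_i)$. The classical determinant trick applied to multiplication by $\xi_i$ on $\gF$ yields a monic polynomial in $\rho(\gA)[T]$ vanishing at $\xi_i$, so each $\xi_i$ is integral over $\rho(\gA)$. There is no real obstacle beyond correctly wiring the universal property of localization together with Theorem~\ref{bezloc1}~1; the role of the auxiliary variable $Z$ is exactly to force $S(\xi_1,\ldots,\xi_n)$ to be a unit in $\gE$, sending all ``non-integral'' zeroes of $\gen{f_1,\ldots,f_m}$ to infinity and keeping only those whose coordinates are integral over $\rho(\gA)$.
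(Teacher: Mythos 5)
Your argument is essentially the same as the paper's: you factor the evaluation map through $\gB[1/s]$ using the universal property of localization (the extra equation $\zeta S(\uxi)=1$ guaranteeing $s$ maps to a unit), and then exploit that $\gB[1/s]$ is a finite $\gA$-module to conclude integrality. The only cosmetic difference is that you apply the determinant trick in the image $\psi(\gB[1/s])\subseteq\gE$, while the paper first notes that each $x_i$ already kills a monic polynomial over $\gA$ inside $\gB[1/s]$ (the characteristic polynomial of multiplication by $x_i$) and then pushes that relation forward to $\gE$; both routes are correct and interchangeable.
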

\begin{proof}
Since $\gB[1/s]$ is a finite \Amoz, each $x_i$ is integral over $\gA$ in $\gB[1/s]$: in fact 
$x_i$ annihilates the characteristic polynomial of the matrix of multiplication by $x_i$ 
with respect to  a system of generators 
of $\gB[1/s]$ as \Amoz
. The morphism $\rho$ gives by factorization a morphism of \Algs $\gB[1/s]\to\gE$ mapping
$(x_1,\dots,x_n,z)$ to $(\xi_1,\dots,\xi_n,\zeta)$. So each $\xi_i$ annihilates a monic polynomial with coefficients in $\rho(\gA)$. 
\end{proof}
%

%:     corollary  superbezloc0
\begin{corollary} \label{superbezloc0}
Same hypotheses and notations as in Theorem \ref{bezloc1} {1}, and in addition assume  that
the  \klg $\ov\gB$ is local, more precisely that  $\ov{\gB}=\ov\gB_{\gen{\ov{x_1},\dots,\ov{x_n}}}$ 
(this means  that each $\ov{x_i}$ is nilpotent, 
or also that $(0,\dots,0)$ is the unique zero of $\so{\ov{f_1},\dots,\ov{f_m}}$ in an algebraic closure of $\gk$). 
\begin{enumerate}
\item There exists an $s\in1+\fm\gB$
 such that $\gB[1/s]=\gB_{\fm+\gen{\ux}}$ %and $\gB_{\fm+\gen{\ux}}$ 
 is a finitely generated \Amoz.
\item  If in addition $\gB:=\gA[X_1, \ldots,X_n]/\!\gen{f_1, \ldots, f_n}=\Axn$, then   the \Alg $\gB_{\fm+\gen{\ux}}$ is a free \Amo of rank $r$, whose basis 
is given by lifting any $\gk$-basis of~$\ov {\gB}$ with its rewriting rules.
\end{enumerate}
\end{corollary}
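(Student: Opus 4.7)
My plan is to apply Theorem \ref{bezloc1} to extract an $s\in 1+\fm\gB$ for which $\gB[1/s]$ is a finitely generated (resp.\ free of rank $r$) \Amoz, and then to show that under the extra locality hypothesis on $\ov\gB$ the algebra $\gB[1/s]$ coincides with $\gB_{\fm+\gen{\ux}}$. For item 2 the element provided is $s=S(\ux)$ with $S\in 1+\fm[\uX]$, which a fortiori lies in $1+\fm\gB$; and since $s\equiv 1$ modulo $\fm\gB$ in both cases, the residual algebra of $\gB[1/s]$ is exactly $\ov\gB$.

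The main step is to argue that $\gB[1/s]$ is local, with maximal ideal equal to the image of $\fm+\gen{\ux}$. Here I invoke the standard Henselian decomposition principle: a finite algebra over a Henselian local ring splits, via lifting of the residual idempotents, as a product of local rings in bijection with the local factors of its residual algebra. Since $\ov\gB$ is already local by hypothesis (each $\ov{x_i}$ is nilpotent, so $\gen{\ov{x_1},\dots,\ov{x_n}}$ is its unique maximal ideal), $\gB[1/s]$ must itself be local, with the claimed maximal ideal.

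The identification $\gB[1/s]=\gB_{\fm+\gen{\ux}}$ is then a short universal-property check: the element $s\in 1+\fm\gB$ is a unit in $\gB_{\fm+\gen{\ux}}$ (because $\ov\gB\ne 0$ gives $1\not\in\fm+\gen{\ux}$), so the localization $\gB\to\gB_{\fm+\gen{\ux}}$ factors through $\gB[1/s]$; conversely every element of $\gB$ outside $\fm+\gen{\ux}$ becomes a unit in the local ring $\gB[1/s]$, so $\gB\to\gB[1/s]$ factors through $\gB_{\fm+\gen{\ux}}$. The two maps are mutually inverse. This yields item 1, and item 2 follows by combining this identification with the freeness, rank and basis-lifting (including rewriting rules) conclusion of Theorem \ref{bezloc1} 2. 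The only non-formal ingredient is the Henselian decomposition step, which in the paper's constructive setting is underwritten by the idempotent-lifting available through MHL as invoked in \cite{ACL2014}; everything else is routine manipulation with localizations.
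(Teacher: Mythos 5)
Your proof is correct, but at the pivotal step --- showing that the localization at the single element $s$ coincides with the localization at the maximal ideal $\fm+\gen{\ux}$ --- you take a genuinely different route from the paper. You invoke the Henselian decomposition of a finite $\gA$-algebra into a product of local rings in bijection with the local factors of the residual algebra, deduce that $\gB[1/s]$ is itself local with maximal ideal the image of $\fm\gB+\gen{\ux}$, and then match the two localizations by the universal property. The paper's proof does not decompose anything: having the $\gA$-morphism $\varphi:\gB[1/s]\to\gB_{\fm+\gen{\ux}}$ (because $s\in 1+\fm\gB$ is a unit in the local ring $\gB_{\fm+\gen{\ux}}$), it reduces the whole question to showing that every $c\in 1+\fm\gB+\gen{\ux}$ is a unit in $\gB[1/s]$, observes that $\ov c$ is a unit in $\ov{\gB[1/s]}=\ov\gB=\ov\gB_{\gen{\ov{x_1},\dots,\ov{x_n}}}$, and then applies Nakayama to the finitely generated $\gA$-module $\gB[1/s]$: multiplication by $c$ is residually onto, hence onto, hence $c$ is invertible. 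Your version calls on a heavier piece of Henselian machinery (idempotent lifting) than this step actually requires --- the Henselian property has already done its work inside the proof of Theorem~\ref{bezloc1} --- whereas the paper's Nakayama argument needs only that $\gB[1/s]$ is module-finite over the local ring $\gA$. Both routes are valid and both make item~2 an immediate corollary of Theorem~\ref{bezloc1}~\emph{2} once $\gB[1/s]=\gB_{\fm+\gen{\ux}}$ is established, carrying over the freeness, the rank $r$, and the lifted border basis with its rewriting rules.
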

\begin{proof} \emph{1}. First we note that $\ov{\gB_{\fm+\gen{\ux}}}=\ov\gB_{\gen{\ov{x_1},\dots,\ov{x_n}}}=\ov\gB$.
We apply Theorem \ref{bezloc1}. We get an $s\in1+\fm\gB$ such that $\gB[1/s]$ is a finitely generated  \Amoz. 
Since $s$ is invertible in $\gB_{\fm+\gen{\ux}}$, we get an $\gA$-morphism $\varphi:\gB[1/s]\to \gB_{\fm+\gen{\ux}}$. We show 
\gui{$\varphi$ is a canonical isomorphism}: the two localizations of $\gB$ are the same one. This means that 
any element $c\in1+\fm\gB+\gen{\ux}$ is invertible in $\gB[1/s]$.
Since $\ov c=1$ in $\ov\gB_{\gen{\ov{x_1},\dots,\ov{x_n}}}$ and $\ov\gB=\ov\gB_{\gen{\ov{x_1},\dots,\ov{x_n}}}$, we get $\ov c=1$ 
in $\ov{\gB[1/s]}$
and the result follows from Nakayama: the multiplication by $c$ in $\gB[1/s]$ is an $\gA$-endomorphism
 which is residually onto, hence  itself is onto and $c$ is invertible in $\gB[1/s]$.

\smallskip \noindent  \emph{2}.
We are in the hypothesis of \ref{bezloc1} \emph{2} and $\gB[1/s]=\gB_{\fm+\gen{\ux}}$.
\end{proof}
%

%:     corollary  cor2bezloc1
\begin{corollary} \label{cor2bezloc1}
Same hypotheses and notation as in Corollary \ref{corbezloc1}. Assume moreover that %$m=n$ and 
$\ov{\gB}=\ov\gB_{\gen{\ov{x_1},\dots,\ov{x_n}}}$ (as in \ref{superbezloc0}~1). Then each $\xi_i$ is ``integral over $\fm$'' (see the precise meaning in the proof).
\end{corollary}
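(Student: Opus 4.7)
The plan is to combine the determinant-trick argument already used in Corollary \ref{corbezloc1} with a particular choice of generators for $\gB[1/s]$ obtained from a border basis. The precise meaning of \gui{$\xi_i$ is integral over $\fm$} I would aim for is: $\xi_i$ annihilates a monic polynomial in $\gE[T]$ all of whose non-leading coefficients lie in $\rho(\fm)\gE$.

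First I invoke Corollary \ref{superbezloc0}~1 (available because of the extra locality hypothesis) to identify $\gB[1/s]$ with $\gB_{\fm+\gen{\ux}}$ and to know it is a finitely generated $\gA$-module. Going into the refined statement of Theorem \ref{bezloc1}~1, I would pick a border basis $(\cB,(\ov{h_\beta})_{\beta\in\pcB})$ of the zero-dimensional local $\gk$-algebra $\ov\gB$ (its existence is guaranteed by the Gröbner basis recipe of (2.2.5)) and lift its rewriting rules to equalities $x^\beta=\sum_{\alpha\in\cB}h_{\beta,\alpha}x^\alpha$ holding in $\gB[1/s]$, so that the family $(x^\alpha)_{\alpha\in\cB}$ generates $\gB[1/s]$ as an $\gA$-module. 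Put $N:=|\cB|$.

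Writing each product $x_ix^\alpha$ either as the generator $x^{\alpha+e_i}$ (when $\alpha+e_i\in\cB$) or via the lifted rewriting rule (when $\alpha+e_i\in\pcB$), I obtain a matrix $M_i\in\mathrm{M}_N(\gA)$ expressing multiplication by $x_i$ on this generating family. The usual determinant trick then produces a monic polynomial $p_i(T):=\det(TI_N-M_i)\in\gA[T]$ with $p_i(x_i)=0$ in $\gB[1/s]$. Reducing mod $\fm$, the matrix $\ov{M_i}$ is by construction the matrix $\ov{\Lambda_i}$ of (2.2.4) attached to the residual border basis; but the border-basis property (2.2.3) ensures that $(\ov{x^\alpha})_{\alpha\in\cB}$ is a genuine $\gk$-basis of $\ov\gB$, so $\ov{\Lambda_i}$ is the well-defined matrix of the endomorphism \gui{multiplication by $\ov{x_i}$} of $\ov\gB$. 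Under the locality hypothesis each $\ov{x_i}$ is nilpotent, hence $\ov{M_i}=\ov{\Lambda_i}$ is a nilpotent matrix in $\mathrm{M}_N(\gk)$ and its characteristic polynomial is $T^N$. Consequently every non-leading coefficient of $p_i$ lies in $\fm$.

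It then remains to transport the relation to $\gE$: since $\zeta S(\ux)(\xi)=1$ in $\gE$, exactly as in Corollary \ref{corbezloc1} the morphism $\rho$ extends to an $\gA$-algebra morphism $\gB[1/s]\to\gE$ sending $x_i$ to $\xi_i$; applying it to $p_i(x_i)=0$ yields the desired monic relation for $\xi_i$ whose non-leading coefficients all lie in $\rho(\fm)\gE$. The delicate point of the plan is ensuring that $\ov{M_i}$ really is the matrix of an endomorphism of the $\gk$-vector space $\ov\gB$, and not merely of some generating family: for an arbitrary generating set of $\gB[1/s]$, nilpotency of the endomorphism of $\ov\gB$ would not force nilpotency of a representing matrix, and the mod-$\fm$ reduction of the characteristic polynomial could then fail to be $T^N$. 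Routing the construction through a border basis of $\ov\gB$ is precisely what resolves this issue, and is the reason one cannot simply reuse the proof of Corollary \ref{corbezloc1} verbatim.
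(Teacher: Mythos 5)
Your argument follows the paper's own proof closely: lift a border basis of $\ov\gB$ to a generating system of $\gB[1/s]$, let $M_i\in\mathrm{M}_r(\gA)$ be the multiplication matrix on these generators, observe that $\ov{M_i}$ is the matrix of the nilpotent endomorphism $\mu_{\ov{x_i}}$ with respect to a genuine $\gk$-basis of $\ov\gB$ (and hence is itself a nilpotent matrix), conclude that $p_i=\det(T\,\mathrm{I}-M_i)$ has all non-leading coefficients in $\fm$, and transport the relation along the morphism $\gB[1/s]\to\gE$. Your closing remark — that routing through a border basis, rather than an arbitrary generating set, is what forces $\ov{M_i}$ to be nilpotent — is precisely the point worth flagging.

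The only genuine divergence is in the precise meaning attached to \gui{integral over $\fm$}. You establish $\mu_{i,j}\in\fm$ for every non-leading coefficient, whereas the paper first conjugates $\ov{M_i}$ into strictly lower-triangular form over $\gk$, lifts the change of basis, and then asserts the sharper estimate that the coefficient of $T^{r-j}$ lies in $\fm^{j}$. That finer claim is not an immediate consequence of the argument as written there: the lifted matrix is strictly lower triangular only \emph{modulo} $\fm$, so its characteristic polynomial reduces to $T^r$ mod $\fm$ and no more; already for a $2\times 2$ principal minor a lift can produce a term of the form $ad-b\ell$ with $a,b,d\in\fm$, $\ell$ a unit, $b\notin\fm^2$, so the minor need not lie in $\fm^2$. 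Thus your weaker formulation is the one that is fully justified — and it is also exactly what is used downstream in the proof of Theorem~\ref{bezloc}~3, where one deduces $\xi_i\in\fm$ from $\xi_i\in\gA$ and $\xi_i^r\in\fm$ via primeness of $\fm$; the refined $\fm^{j}$ control plays no role there.
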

\begin{proof} Since $\ov{x_i}$ is nilpotent, the multiplication $\mu_{\ov{x_i}}$ by $\ov{x_i}$ in $\ov\gB$ is given by a lower triangular matrix $M_i$ with zeroes on the diagonal (after a suitable change of basis as a $\gk$-vector space).
Since we have a generator system $B$ of $\gB[1/s]$ as an \Amo which is lifted from a border basis $\cB$ of $\ov\gB$, and since the rewriting rules are also lifted, the multiplication by $x_i$ in $\gB[1/s]$ can be expressed w.r.t. a generator system $B'$ (obtained from $B$ by lifting the above change of vector space basis) by a lifting of the above matrix $M_i$ and  the characteristic polynomial of this lifted matrix has the form 
\smash{$T^r+\sum_{j=0}^{r-1}\mu_{i,j}T^{r-j}$} with $\mu_{i,j}\in\fm^j$.  \\So,
we get an equality in $\gE$: $\xi_i^r+\sum_{j=0}^{r-1}\rho(\mu_{i,j})\xi_i^{r-j}=0$ and $\mu_{i,j}\in\fm^j$.
\end{proof}
%

%\noindent \hum{J'ai rajouté l'hypothèse $m=n$ dans le corollaire \ref{cor2bezloc1}
%car sinon, la base résiduelle est seulement liftée comme système générateur et j'ai du mal à voir pourquoi un polynôme caractéristique de $x_i$
%sur ce système générateur est obtenu par simple lifting de $T^r$.
%Peut-être j'ai tort.}

%%%%%%%%%%%%%%%%%%%%%%%%%%%%%%%%%%%%%%%
\begin{pf}{Proof of Theorem \ref{bezloc1}} ~ \\
\emph{1}. We set   $\gB=\AXn/I$ for some ideal $I\subseteq \AuX$. 
As $\ov{\gB}$  is a finitely generated nonzero $\gk$-vector space, we can represent it as a quotient of some finitely presented nonzero \klg 
$$\gk[X_1, \ldots,X_n]/\! \gen {\ov{f_1}, \ldots, \ov{f_m}}$$
for some $f_i\in I$. So $\gB$ is a quotient of the finitely presented
\Alg $${\gB_1}= \gk[X_1, \ldots,X_n]/\! \gen {{f_1}, \ldots, {f_m}}$$
whose residual \klg $\ov{\gB_1}$ is a $\gk$-vector space with a
finite $\gk$-basis.\\
Hence w.l.o.g.\ we can assume that $\gB$ is finitely presented.  
Hence $\ov\gB$ has a border basis. There exists a finite set of monomials $\cB \subset \NN^n$ containing $1$ and closed by division, 
such that
$\{x^{\alpha}: \alpha \in \pcB \}$ is a $\gk$-basis of $\ov{\gB}$, and 
for $\beta \in \pcB$
$$
h_{\beta}^{0}(X):= X^{\beta}-\som_{\alpha \in \cB } u_{\beta,\alpha}^{0} X^{\alpha} \quad \quad 
(\hbox{with }u_{\beta,\alpha}^{0}  \in \gk)
$$
are  the corresponding rewriting rules. 
%s.t. calling $\mu_{X_i}$ the matrix of multiplication by $X_i$ in the vector space
%$$
%{\mathrm {Span}}_\gk(\cB) \rightarrow {\mathrm {Span}}_\gk(\cB)
%$$
%defined by:
%$\mu_{X_i}(X^{\alpha})=\sum_{\alpha' \in \cB } u_{\alpha' \beta}^{0} X^{\alpha'}$ whenever 
% $X^{\beta}:=X^{\alpha}X_i$ and   $X^{\beta}\in \delta (\cB)$ 
%  $\mu_{X_i}(X^{\alpha})=X_iX^{\alpha}$ if still  $X^{\beta}\in \cB$).
%
%\smallskip 
%We have:
%\begin{itemize}
%%
%\item [i)] $h^0_{\beta}(X)=0 \ $ in $\ov{\gB}$ for every $\beta$
%%
%\item [ii)]  $\# \cB \leq r$ 
%%
%\item  [iii)] the multiplication matrices conmute:
% $\mu_{X_i} \circ \mu_{X_j}=\mu_{X_j} \circ  \mu_{X_i}.$
%\end{itemize}
 %Let 
%In particular
%$\gen {\ov{f_1}, \ldots, \ov{f_m}} =\gen {h^0_{\beta}: \beta\in \pcB } \subseteq \kXn$, 

%Indeed it is proved in \cite{}
%that the ring $k[X_1, \ldots, X_n]/\langle h^0_{\beta}: \beta\in \pcB \rangle $ has basis
%$ \{X^{\alpha}: \alpha \in \pcB\}$ if and only if 
%the multiplication matrices conmute as in iii).

\medskip 
Now we follow  the constructions in the proof of \cite[Theorem 1]{AL2010} (see Claim 4 in \cite{AL2010}). % but the situation is simpler.
As $\gen {\ov{f_1}, \ldots, \ov{f_m}} =\gen {h^0_{\beta}: \beta\in \pcB } \subseteq \kXn$, 
we have  $h^0_{\beta}=\sum_{i=1}^m \ov{p_{i,\beta}}\;\ov{f_i}$, with some ${ p_{i,\beta}} \in \AuX$. 
For $\beta \in \pcB$ we put
$$
H_{\beta}(\uX):=\som_{i=1}^m p_{i,\beta} f_i \quad \quad (\hbox{so } \ov{H_{\beta}}=h_{\beta}^0).
$$ 

Next we reduce these polynomials $H_{\beta}$ with the following 
``formal rewriting rules''
$$
\wi{h_{\beta}}:= X^{\beta}-\som_{\alpha \in \cB} \wi{u_{\beta,\alpha}}X^{\alpha},
$$
where $\wi{u_{\beta,\alpha}}$
 are variables, whose suitable values in $\A$ we are looking for.

Following \cite{AL2010} and \cite{ABM} we get

%\noindent \hum{C'est ennuyeux de devoir faire appel à \cite{ABM}, qui est un technical report et qui n'était pas utilisé en tant que tel dans \cite{AL2010}. Le lecteur est obligé de faire confiance aux auteurs.} 
%  equation label {eqHbetatilde}
\begin{equation} \label {eqHbetatilde}
H_{\beta}= \som_{\beta' \in \pcB} \wi{Q_{\beta,\beta'}} \wi{h_{\beta'}} + \wi{R_{\beta}}
\end{equation}
%  end-equation
for  polynomials $\wi{Q_{\beta,\beta'}}((\wi{u_{\beta,\alpha}}),\uX) \in \gA[(\wi{u_{\beta,\alpha}})_{\beta\in  \pcB,\alpha\in \cB},\uX]$
and 
$$\wi{R_{\beta}} =\sum\nolimits_{\alpha\in\cB} \wi{R_{\beta, \alpha}} X^{\alpha}\quad \quad \hbox{with}
\,\, \wi{R_{\beta,\alpha}}((\wi{u_{\beta,\alpha}}))\in  \gA[(\wi{u_{\beta,\alpha}})].
$$

Moreover 
$$\wi{\Delta}((\wi{u_{\beta,\alpha}}),\uX):= \det((\wi{Q_{\beta,\beta'}})_{\beta,\beta'\in\pcB}) \in 1+\fm[ (\wi{u_{\beta,\alpha}}),\uX].$$

In \cite{AL2010} and \cite{ABM} we proved that $(u_{\beta,\alpha}^{0})$ is  an isolated simple zero of the system $\so{ \ov{\wi{R_{\beta, \alpha}}} =0}$. Since $\A$ is Henselian, by MHL, there exists  a unique solution 
$(u_{\beta,\alpha}) \in \gA^{\pcB\times \cB}$  of the system $ \so{\wi{R_{\beta, \alpha}} =0%: \beta \in \pcB,\alpha\in  \gB
}
$
lifting the solution $(u_{\beta,\alpha}^0) \in \gk^{\pcB\times \cB}$.

For $\beta \in \pcB$, we define 
\[ 
\begin{array}{rcl} 
 h_{\beta} & :=  & X^{\beta}-\sum_{\alpha\in\cB} u_{\beta,\alpha} X^{\alpha},  \\[.3em] 
Q_{\beta,\beta'}(\uX)  & :=  &  \wi{Q_{\beta,\beta'}}(({u_{\beta,\alpha}}),\uX) \in \AuX, \\[.3em] 
S(\uX)  & :=  &  \wi{\Delta}((u_{\beta,\alpha}),\uX) =\det((Q_{\beta,\beta'})_{\beta,\beta'\in\pcB}) \in 1+\fm[\uX].
 \end{array}
\]
Equalities (\ref{eqHbetatilde}) give
%  equation label {eqHbeta}
\begin{equation} \label {eqHbeta}
H_{\beta}= \som_{\beta' \in \pcB} {Q_{\beta,\beta'}} {h_{\beta'}}.
\end{equation}

\smallskip Let $s=S(\ux)\in\Aux=\gB$.
The ideal $\gen{f_1,\dots,f_m,SZ-1}\subseteq  I \subseteq  \gA[\uX,Z]$ contains polynomials~$h_\beta$'s for $\beta\in\cB$
since $h_\beta\equiv ZSh_\beta$ and $Sh_\beta$ is expressed from the $H_{\beta'}$'s using the cotransposed matrix 
of~$(Q_{\beta,\beta'})_{\beta,\beta'\in\pcB}$. As $f_i $ belong to the ideal $ I$, we get a well defined \Alg morphism   
\[ 
\begin{array}{c} 
 \Phi: \gC=\AuX/\!\gen {(h_{\beta})_{\beta\in\pcB}}
\longrightarrow \A[\uX, Z]/\gen {I,SZ -1} =\gB[1/s]     
\\[.3em] 
 X_i \longmapsto X_i \hspace{5.8em}
 \end{array}
\]
As an \Amoz, $\gC$ is generated by the classes  $X^\alpha \mod J=\gen{(h_{\beta})_{\beta\in\pcB}}$ (for $\alpha\in\cB$).  

Moreover $\Phi$ is surjective because $S$ has an inverse in $\Phi(\gC)$. Indeed 
  $\Phi(\gC)$ is a finitely generated $\gA$ module and the multiplication by $S$ in $\Phi(\gC)$ is
residually onto
(it is the identity), therefore
 by  Nakayama lemma the multiplication by $S$  is itself onto on $\Phi(\gC)$. 

As we get  $\Phi(\gC)=\gB[1/s]$, 
$\gB[1/s]$ is  generated as \Amo by the lifting $\so{x^\alpha;\alpha\in\cB}$ of $\cB$ and the $h_{\beta}$'s  are rewriting rules for this generator system.

%\noindent \hum{la base résiduelle est plutôt  $\so{{\ov x}^\alpha;\alpha\in\cB}$ que $\cB$, mais c'est difficile d'avoir des notations correctes et lisibles en même temps.}

\smallskip \noindent \emph{2}. We have $\gB= \AXn /\!\gen {f_1, \ldots, f_n} $. 
By item \emph{1} we know that $\gB[1/s]$ is a finitely generated \Amoz \ for some $S(X)\in 1 + \fm[\uX]$. We can apply  the theorem of de Smit \&  Lenstra \ref{thdeSmitLenstra} to conclude that $\gB[1/s]$ is a  finite free \Amoz. Clearly its rank is the same after 
$\otimes_\gA\gk$, so it is  
equal to the dimension $r=\#\cB$ of the \kev $\ov{\gB[1/ s]}=\ov\gB$.
Since $\gB[1/s]$ is generated by a lifting of $\cB$, we conclude by saying that in a  free module of rank $r$, any generator system of $r$ elements is a basis.

\medskip On the other hand, since the  
generator system $\so{x^\alpha:\alpha\in\cB}$ of the \Amo $\gC$ 
is mapped by $\Phi$ to a basis of the
\Amo $\gB[1/s]$ with the same number of elements,
we have that $\Phi$ is injective and therefore it is an isomorphism. This fact will be used in Section \ref{classical}.
In consequence we have finished the proof of the theorem.
\end{pf}

%\smallskip )
%\noindent {\bf Remark:}   
%
\medskip  
Our main concern is the following result in which item \emph{3} generalizes \cite[Theorem~11]{AL2010}.
We point out  that in \cite[Theorem~11]{AL2010}  we forgot to give as an hypothesis the fact that
$\gA$ is integrally closed in~$\gK$: the proof was given only for the  case
of a valuation domain and used implicitely the normality hypothesis in the general case.

%:   theorem  bezloc
\begin{theorem} \label{bezloc} \emph{(B\'ezout local)} Let $(\A,\fm,\gk)$ be a Henselian local ring and  
$\gB:=\Axn$ a finitely generated $\gA$-algebra. Let 
$\ov{\gB}:=\gB/\fm\gB=\gk[\ov{x_1},\dots,\ov{x_n}]$ be the residual algebra. Assume that $\ov\gB$ is finitely presented\footnote{From a constructive point of view we cannot deduce that $\ov\gB$ is finitely presented from the fact it is finitely generated.} as a \klg and that $\dim_\gk \ov\gB_{\gen{\ov{x_1},\dots,\ov{x_n}}}=r\geq 1$
(this means that $(\uze)$ is an isolated residual zero of multiplicity $r$). Then $\fm\gB+\gen{x_1,\dots,x_n }$ (denoted as $\fm+\gen{\ux}$)
 is a maximal ideal of $\gB$ and 
% setting $\gB_1=\gB_{\fm+\gen{\ux}}$, $\ov{\gB_1}:=\ov\gB_{\gen{\ov{x_1},\dots,\ov{x_n}}}$  
 we have:
 
%\vspace{-1em}
\begin{enumerate}
\item (Pure algebraic form, without zeroes)\\ 
 The \Alg $\gB_{\fm+\gen{\ux}}$ is a finitely generated  \Amoz: 
 there exists an $u\in1+\fm\gB+\gen{\ux}$
 such that 
 $$\gB[1/u]=\gB_{\fm+\gen{\ux}},\; 
  \ov\gB[1/\ov u]=\ov\gB_{\gen{\ov{x_1},\dots,\ov{x_n}}},
 $$
  and we get a generator system of the \Amo $\gB_{\fm+\gen{\ux}}$
  by  lifting  a $\gk$-border basis of $\ov\gB[1/\ov u]$.
\item (Local complete intersection case, without zeroes)\\ If in addition $\gB$ is finitely presented as 
 \hbox{$\gB:=\AuX/\!\gen{f_1, \ldots, f_n}=\Axn$}, 
% for some $f_1, \ldots, f_n \in \AXn $}, 
 then $\gB_{\fm+\gen{\ux}}$ is   
is a free \Amo  of rank $r$, whose basis is given by lifting any $\gk$-border basis of~$\ov {\gB}_{\gen{\ov{x_1},\dots,\ov{x_n}}} $ with its rewriting rules.
\item (Local complete intersection case, usual form, with zeroes)\\ Assume  that  $(\A,\fm,\gk)$ is 
 a local normal domain with algebraically closed quotient 
field~${\K}$ and let $\gB$ as in item 2. Then, there are finitely many zeroes of $(f_1,\ldots,f_n)$
 above the residual zero~$(\uze)$ (i.e., zeroes with coordinates 
 in $\fm$) and the sum of their multiplicities
 equals~$r$. 
\end{enumerate}
\end{theorem}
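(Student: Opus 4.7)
The proof reduces the general statement to the local residual case already handled in Corollary~\ref{superbezloc0}. For item~1, the plan is first to construct a single element $\ov u \in 1+\langle\ov\ux\rangle \subseteq \ov\gB$ with $\ov\gB[1/\ov u] = \ov\gB_{\langle\ov\ux\rangle}$. Writing $\fM=\langle\ov\ux\rangle$: since $\ov\gB$ is finitely presented over $\gk$ (hence Noetherian), the kernel of $\ov\gB \to \ov\gB_\fM$ is a finitely generated ideal each of whose generators is annihilated by some element of $\ov\gB\setminus\fM$, and multiplying these annihilators and rescaling using $\ov\gB/\fM=\gk$ produces the desired $\ov u$. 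Lifting to $u_0\in 1+\fm\gB+\langle\ux\rangle$, the residual algebra of $\gB[1/u_0]$ is $\ov\gB_\fM$, which is local Artinian of $\gk$-dimension $r$, so Corollary~\ref{superbezloc0} applies to $\gB[1/u_0]$ and yields an $s\in 1+\fm\gB[1/u_0]$ such that $\gB[1/u_0][1/s]$ is finitely generated over $\gA$, with a generator system coming from a lifted border basis of $\ov\gB_\fM$. Writing $s=s_0/u_0^N$, the equality $\gB[1/u_0][1/s]=\gB[1/(u_0 s_0)]$ holds, and the residual equality $\ov s=1$ in $\ov\gB_\fM$ forces $\ov{s_0}=\ov{u_0}^N\equiv 1\pmod\fM$. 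Setting $u:=u_0 s_0 \in 1+\fm\gB+\langle\ux\rangle$, one checks $\gB[1/u]=\gB_{\fm+\langle\ux\rangle}$ and $\ov\gB[1/\ov u]=\ov\gB_\fM$, while the generator system from Corollary~\ref{superbezloc0} is by construction the lifting of a border basis of $\ov\gB[1/\ov u]$.

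For item~2, the \Alg $\gB[1/u]=\gA[\uX,Z]/\langle f_1,\dots,f_n,ZU-1\rangle$ (with $U\in\AuX$ a polynomial lift of $u$) is presented with $n+1$ generators and $n+1$ relations and is finite over $\gA$ by item~1; the theorem of de Smit and Lenstra (Theorem~\ref{thdeSmitLenstra}) then gives flatness, hence freeness since $\gA$ is local. The rank equals $r$ by tensoring with $\gk$, and the generator system of item~1, having exactly $r$ elements, is therefore a basis of this free module.

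For item~3, tensor the rank-$r$ free \Amo $\gB[1/u]$ with $\gK$ to obtain a finite $\gK$-algebra of dimension $r$. Its maximal ideals correspond bijectively (by the Nullstellensatz, $\gK$ being algebraically closed) to zeroes $\xi\in\gK^n$ of $(f_1,\dots,f_n)$ with $U(\xi)\neq 0$. Corollary~\ref{cor2bezloc1} applied to $\gB[1/u]$ shows each such $\xi$ satisfies an integral equation of the form $\xi_i^r+\sum_{j=1}^{r}\rho(\mu_{i,j})\xi_i^{r-j}=0$ with $\mu_{i,j}\in\fm^j$, which reduces modulo $\fm$ to $\ov{\xi_i}^{\,r}=0$ and hence forces $\xi_i\in\fm$ (using normality of $\gA$ to place $\xi_i$ in $\gA$ in the first place). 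Conversely, any zero with $\xi_i\in\fm$ satisfies $U(\xi)\equiv 1\pmod\fm$, hence $U(\xi)\neq 0$. Thus the maximal ideals of $\gB[1/u]\otimes_\gA\gK$ enumerate exactly the zeroes above the residual zero, and the sum of the $\gK$-dimensions of the corresponding local Artinian components (i.e.\ the multiplicities) equals $r$.

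The main obstacle is the bookkeeping in item~1: assembling a single $u\in 1+\fm\gB+\langle\ux\rangle$ out of the separate localizations needed first to reach the residually local situation and then to run the Nakayama-type argument of Corollary~\ref{superbezloc0}, while simultaneously transporting the border-basis lifting through these steps. Once this is done, items~2 and~3 are direct applications of the tools already developed, with item~3 relying crucially on the integral relation with coefficients in $\fm^j$ provided by Corollary~\ref{cor2bezloc1}.
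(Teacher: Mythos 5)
Your proposal is correct and follows essentially the same route as the paper: first localize so that the residual algebra becomes the local one (the paper gets this from the idempotent of \cite[Theorem IX-4.7]{CACM}, you from the Noetherian kernel/annihilator argument, which is classically equivalent though less constructive), then apply Corollary \ref{superbezloc0} and de Smit--Lenstra for items \emph{1}--\emph{2}, and for item \emph{3} use freeness of rank $r$, the decomposition of the finite $\gK$-algebra into local factors, and Corollaries \ref{corbezloc1}--\ref{cor2bezloc1} together with normality to show the zeroes seen are exactly those with coordinates in $\fm$. The one point to tidy up is that in your presentation $\gA[\uX,Z]/\gen{f_1,\dots,f_n,ZU-1}$ the extra generator $z$ is residually a unit, so the hypotheses of Corollaries \ref{superbezloc0} and \ref{cor2bezloc1} (all residual generators nilpotent) are not literally satisfied; the paper sidesteps this with the shifted relations $E(T+1)-1$ and $S(Z+1)-1$, and your argument goes through verbatim after the analogous replacement $U(Z+1)-1$ (or by observing that the proofs of those corollaries work coordinate by coordinate, only the $x_i$'s being needed).
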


\begin{proof}

\emph{1} and \emph{2}. The hypothesis $\dim_\gk(\ov{\gB_1})=r\geq 1$ means that $(\uze)$ is residually an isolated zero of multiplicity $r$. So, we can construct an $e\in\gB$
with $\ov e\in\ov\gB$ such that
\begin{itemize}
\item $\ov e$ is an idempotent in $\ov\gB$,
\item  for a suitable integer $N$,   $\gen{1-\ov e}=\gen{\ov{x_1},\dots,\ov{x_n}}^{N}$ in $\ov\gB$,
\item  $\ov\gB[1/\ov e]=\ov{\gB_1}$.
\end{itemize}
(See \hbox{\cite[Theorem IX-4.7]{CACM}}.) 
In particular
\[ 
\begin{array}{lll} 
e\in 1+\fm\gB+\gen{\ux}  & \hbox{and}  & e^2-e\in\fm\gB   
\end{array}
\]
Let $\gC:=\gB[1/e]$. Since $e\in 1+\fm\gB+\gen{\ux}$, we have $\gC_{\fm+\gen{\ux}}=\gB_{\fm+\gen{\ux}}$, and
$$
\ov\gC=\ov\gB[1/\ov e]=\ov{\gB}_{\gen{\ov{x_1},\dots,\ov{x_n}}}=\ov{\gC}_{\gen{\ov{x_1},\dots,\ov{x_n}}}.
$$
 So $\gC$ is a finitely presented  \Alg  to which we can apply Corollary~\ref{superbezloc0} \emph{1}. There exists $s\in1+\fm\gC$ such that $\gC[1/s]=\gC_{\fm+\gen{\ux}}$.
Since $e\in1+\fm\gB+\gen{\ux}$ we see that $s$ is written as $v(x)/e^m$ for \hbox{a $v\in1+\fm\gB+\gen{\ux}$} and a suitable exponent $m$.
Finally we see that 
$$\gC[1/s]=\gB[1/u]=\gB_{\fm+\gen{\ux}}
$$ 
where $u=sv\in1+\fm\gB+\gen{\ux}$. 
This finishes the proof of item \emph{1}.

\medskip \noindent \emph{2}.
Now we give some details for describing $\gC$ and $\gC[1/s]$ when $m=n$. %\\
Let $E(\uX)\in\AuX$ \hbox{s.t.\ $e=E(\ux)$}, we have
\[ 
\begin{array}{lll} 
1-E\in\fm+\gen{\uX}+\gen{f_1,\dots,f_n}  & \hbox{and}  & E^{2}-E\in \fm[\uX] +\gen{f_1,\dots,f_n} \quad (\hbox{inside } \AuX)  
\end{array}
\]
Therefore  
$$
\gC:=\gB[1/e]=\A[\ux,1/e]=\gA[\uX,T]/\!\gen{\fn,E(\uX)(T+1)-1}=\A[\ux,t]
$$
Now we write $s=S(\ux,t)$ where $S(\uX,T) \in 1+\fm[\uX,T]$ 
and  Corollary~\ref{superbezloc0} \emph{2} says us that
$$\gD:=\gC[1/s] =
  \gA[\uX,T,Z]/\!\gen{\fn,E(\uX)(T+1)-1, S(\uX,T)(Z+1)-1}
$$ 
is a free \Amo of rank $r$, whose basis is given by lifting any $\gk$-border basis of~$\ov {\gB}[1/\ov e]=\ov {\gB}_{\gen{\ov{x_1},\dots,\ov{x_n}}} $ with its rewriting rules..
This finishes the proof of item \emph{2}.

\medskip\noindent  \emph{3.} First of all notice that under the hypothesis of \emph{3},
$\gA$ is a Henselian local ring and item \emph{2} applies.  Let
 $$\Sigma:=\{ f_1, \ldots, f_n,E(\uX)(T+1)-1, S(\uX,T)(Z+1)-1 \} $$ 
Since $\D$ is a free $\A$-module of rank $r$, $\D\otimes_{\A}\K$ is a finite \Kev of dimension $r$.
\\
Since $\K$ is an algebraically closed field, 
by Stickelberger's theorem (see \cite[Theorem IV-8.17]{CACM}), we can decompose $\D\otimes_{\A}\K$ as 
\begin{equation}\label{ceros}
 \prod\limits_{(\xi_{1},\dots,\xi_{n+2})\in {\mathcal Z}_{\K}(\Sigma)}
 \K[\uX, T, Z]_{(\uX-\uxi,
T-\xi_{n+1},Z-\xi_{n+2})}/\langle  \Sigma \rangle
\end{equation}
In each  $(\xi_{1},\dots,\xi_{n+2})\in {\mathcal Z}_{\K}(\Sigma)$ the corresponding local \Klg is  zero-dimensional and the sum of multiplicities is equal to $r$.

\smallskip It remains to prove that 
the points of ${\mathcal Z}_{\K}(\Sigma) $  correspond exactly
to the zeroes of $(f_1,\dots,f_n)$ in $\gK$
having their coordinates in $\fm$, and that the corresponding local \hbox{\Klgsz} are isomorphic.
\\ 
First let  $(\xi_{1},\dots,\xi_{n+2})\in {\mathcal Z}_{\K}(\Sigma)$
where $\xi_{1},\dots,\xi_{n}\in \fm$. Since $E(\ux)\in1+\fm+\gen{\ux}$, we get~$\varepsilon:=E(\xi_{1},\dots,\xi_{n})\in1+\fm$. This $\varepsilon$ has a unique inverse in $\gK$, and this inverse is written $1+\mu$ with $\mu\in\fm$.
This forces $\xi_{n+1}=\mu$.
Morevover the two local algebras at $(\xi_{1},\dots,\xi_{n})$ and
$(\xi_{1},\dots,\xi_{n+1})$ are ``equal''  
$$
\Kxn_{ \gen{x_1-\xi_{1},\dots,x_n-\xi_{n}} }\simeq\gK[x_1,\dots,
,x_{n+1}]_{ \gen{x_1-\xi_{1},\dots,x_{n+1}-\xi_{n+1}} }.
$$ 
This follows from the facts that $\gK[x_1,\dots,
,x_{n+1}]=\Kxn[1/E(\ux)]$ and that
$E(\ux)$ is invertible in $\Kxn_{(x_1-\xi_{1},\dots,x_n-\xi_{n})}$. 
Indeed 
$$
E(\xn)\equiv E(\xi_{1},\dots,\xi_{n})=\varepsilon \mod \gen{x_1-\xi_{1},\dots,x_n-\xi_{n}} \hbox{ in }\Kxn,
$$
so $\gen{E(\xn),x_1-\xi_{1},\dots,x_n-\xi_{n}}=\gen{1}$ in $\Kxn$.
\\ 
Similarly $S(\xi_{1},\dots,\xi_{n},\mu)\in1+\fm$, it has a unique inverse in $\gK$, and this inverse is written $1+\nu$ with $\nu\in\fm$.
This forces $\xi_{n+2}=\nu$  and the equality 
$$
\Kxn_{(x_1-\xi_{1},\dots,x_{n+1}-\xi_{n+1})}\simeq\gK[x_1,\dots,
,x_{n+1}]_{(x_1-\xi_{1},\dots,x_{n+2}-\xi_{n+2})}.
$$

\smallskip\noindent  
It remains to see that any zero $(\xi_{1},\dots,\xi_{n+2})\in {\mathcal Z}_{\K}(\Sigma)$ has its first $n$ coordinates in $\fm$.
Corollary~\ref{corbezloc1} (with $\gE=\gK$ and $\rho:\gA\to\gK$ the inclusion morphism) shows that $\xi_{1},\dots,\xi_{n+1}$
are in $\gA$ (recall that~$\gA$ is assumed to be integrally closed). Then Corollary~\ref{cor2bezloc1} shows that $\xi_{1},\dots,\xi_{n+1}$
are in~$\fm$.
\end{proof}
%%%%%%%%%%%%%%%%%%%%%%%%%%%%%%%%%%%%%%%

%r
%:     Remark{rembezloc}
\begin{remark} \label{rembezloc} 
{\rm  Theorem \ref{bezloc} {1.} can be seen as a generalization of the Mather-Weierstrass division theorem. 
In fact let $\gk$ be a field, 
$$
\gA_n=\gk[[X_1,\dots,X_n]]_{\rm alg} 
\hbox{ (with maximal ideal }\fm_n), 
\gA_{m}=\gk[[X_1,\dots,X_n,Y_1,\dots,Y_\ell]]_{\rm alg},
$$ 
$I=\gen{F_1,\dots,F_q}$ an ideal of $\gA_{m}$, and assume that the morphism $\gA_n\to \gC=\gA_{m}/I$ is quasi-finite (i.e.\  $\gC/\fm_n\gC$ is a finite dimensional $\gk$-vector space). We will see that $\gC$ is a finite \Amoz.\\
As %$\gA_n$ is Henselian and 
$\gA_m$ is the henselization of $(\gA_n[Y_1,\dots,Y_\ell])_{\fm_n+\gen{Y_1,\dots,Y_\ell}}$, we can assume that~$I$
is contained in some %elementary Henselian extension of $\gA_n$:  
\[ 
\begin{array}{lll} 
(\gA_n[\uY,t])_{\fm_n+\gen{\uY,t}} &\hbox{ where }&\gA_n[\uY,t]=\gA_n[\uY,T]/\!\gen{f(\uY,T)},  \\[.3em] 
  &&f(\uY,T)\in\gA_n[\uY,T],\,f(\uze,0)\in\fm_n, \,f_T(\uze,0)\in1+\fm_n,
  \\[.3em] 
 %S=\sotq{g\in\gA_n[\uY,T]}{g(\uY,0)\in1+\fm_n+\gen{\uY}},
  &&t\in\gA_m \;\hbox{ and }\;  f(\uY,t)=0.
 \end{array}
\]
W.l.o.g. we assume that $F_i=P_i(\uY,t)$ where $P_i(\uY,T)\in\gA_n[\uY,T]$. We apply the theorem with $\gA=\gA_n$, 
$\gB=(\gA_n[\uY,T]/\!\gen{P_1,\dots,P_q})_{\fm+\gen{\uY}}$.
Finally we get that $\gB$ is a finite \Amoz. So, it is Henselian
and $\gC=\gB$. And $\gC$ is a finite \Amoz. This is the Mather Theorem.\\
In the case of $q=1$, we get the Weierstrass division theorem applying
\ref{bezloc} {2.}   
}\end{remark}

\section{Application}\label{classical}

This section is written in classical mathematics, allowing us to deal with $\Co$ as if it were a discrete field
(i.e. assuming we have an equality test for complex numbers).

E.g., Theorem \ref{univar-cont} is not written in a constructive form
because, since there is no zero test for elements of $\CC$, it is impossible to know in the general situation what are exactly the distinct zeroes and their multiplicities for the perturbed polynomial. 
E.g., it is a priori impossible to know 
if a monic univariate polynomial of degree two has one double root or two distinct roots. 
A constructive and continuous form of the FTA has a slightly different formulation, the best one being \cite{Ric00},
where the zeroes of a complex polynomial are seen as forming a multiset that varies continuously in a suitable complete metric space. 
See also \cite[Chapter 5]{Bi67} for another constructive formulation, and \cite[Appendix~A, page 276]{Ost73} for an optimal modulus of continuity. We think that Theorem
\ref{thBezoutLocalConcret} would need a subtle constructive reformulation, with a more precise proof
than that we give here. 

Finally we would like to point out that,  in the spirit of the proof of \cite[Eisermann]{Eis2012} which is  an analogous to 
\ref{univar-cont} for the algebraic closure $\gR[i]$ of  a real closed field $\gR$, one could prove
the classical  multivariate version  \ref{thBezoutLocalConcret} for  $\gR[i]$,  taking into account the usual manipulation of
semialgebraic continuous functions w.r.t.\ 
the topology in  $\gC^n\cong \gR^{2n}$ defined by the order of the real closed field~$\gR$.

\smallskip We start with a classical result of complex analysis, about the continuity of 
roots of a univariate 
polynomial defined over $\Co$ (the field of complex numbers),  with respect to its coefficients. 
By $\Omega(\xi, \epsilon)$ we denote the polydisc
of $\Co^n$ 
centered at 
$\xi\in \Co^n$: that is   
$$\Omega(\xi, \epsilon)=\sotq{\xi' \in \Co^n}{ \abs {\xi_j-\xi'_j} < \epsilon,\, j\in\lrbn }.
$$ 
The statement  is the following easy consequence of Rouché's Theorem.

\begin{theorem}\label{univar-cont}
 Let be  $P(Y)=Y^d+\sum_{i\in\lrb{0..d-1}} a_iY^i\in \Co[Y]$ a 
nonzero polynomial of degree $d\geq 1$, and let  $P(Y)=\prod_{i\in\lrbs}(Y-\xi_i)^{m_i}$
where $\xi_i\in \Co$, $i\in\lrbs$ are the distinct roots of $P$. Let $\epsilon >0$ be s.t. 
$\Omega(\xi_i, \epsilon) \cap \Omega(\xi_j, \epsilon)=\emptyset$ for every $i\neq j$. Then, there exists $\delta>0$ s.t. 
for every $(\wha_1, \ldots, \wha_d)\in \Omega((a_1, \ldots, a_d), \delta)$, 
the polynomial $\wh P(Y)=Y^d+\sum_{i\in\lrb{0..d-1}} \wha_iY^i$ has exactly $m_j$ roots counting with multiplicities 
in $\Omega(\xi_j, \epsilon)$, for every $j\in\lrbs$.

\end{theorem}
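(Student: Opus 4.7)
The plan is to deduce Theorem~\ref{univar-cont} from Rouch\'e's theorem applied separately on each of the discs $\Omega(\xi_j,\epsilon)$. First I would observe that the disjointness condition forces $|\xi_i-\xi_j|\geq 2\epsilon$ for all $i\neq j$, so the closed disc $\overline{\Omega(\xi_j,\epsilon)}$ contains no root of $P$ other than $\xi_j$ itself. In particular no zero of $P$ lies on the circle $C_j=\{\,y\in\Co:|y-\xi_j|=\epsilon\,\}$, and since each $C_j$ is compact the quantity
\[ m \;:=\; \min_{j\in\lrbs}\,\min_{y\in C_j}|P(y)| \]
is strictly positive.

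Next I would obtain a uniform bound on $|\wh P-P|$ over $\bigcup_j C_j$. Choosing $R>0$ so that $\bigcup_j C_j\subset\{\,y\in\Co:|y|\leq R\,\}$ and setting $M_R:=\sum_{i=0}^{d-1}R^i$, for any coefficients satisfying $\max_i|\wha_i-a_i|<\delta$ the triangle inequality yields
\[ |\wh P(y)-P(y)| \;\leq\; \delta\,M_R \qquad \text{for every } y\in\bigcup_{j\in\lrbs}C_j. \]
Taking $\delta$ strictly smaller than $m/M_R$ then gives $|\wh P(y)-P(y)|<|P(y)|$ on every $C_j$, which is precisely the hypothesis of Rouch\'e's theorem applied on each open disc $\Omega(\xi_j,\epsilon)$ to the pair $P$ and $\wh P=P+(\wh P-P)$.

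Rouch\'e's theorem then ensures that $P$ and $\wh P$ have the same number of zeros in $\Omega(\xi_j,\epsilon)$ counted with multiplicity; since $P$ contributes exactly $m_j$ such zeros (namely $\xi_j$ with multiplicity $m_j$, the other $\xi_k$ lying outside the closed disc), $\wh P$ also has $m_j$ zeros there, as required. The only step demanding any care is the strict positivity of $m$, and that is immediate from compactness of the $C_j$ together with the fact that they avoid the zero set of $P$; I therefore do not anticipate any serious obstacle, the argument being the textbook use of Rouch\'e, with only some bookkeeping of the indexing of the coefficients inside the polydisc $\Omega((a_1,\dots,a_d),\delta)$ (the leading coefficient $1$ stays fixed, and only the $d$ subleading ones are perturbed).
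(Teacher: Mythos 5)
Your proof is correct and is exactly the argument the paper has in mind: the authors state the theorem as ``an easy consequence of Rouch\'e's Theorem'' and leave the verification to the reader, and your write-up supplies precisely that verification (disjoint discs, compactness of the circles $C_j$ giving a positive lower bound $m$ for $|P|$, a uniform bound on $|\wh P - P|$ over $\bigcup_j C_j$, and Rouch\'e on each disc). The only place you flagged for care, the positivity of $m$, is indeed the only nontrivial step, and you handled it correctly.
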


This section consists in giving a clear proof of the following 
theorem, which is a  classical result about 
the continuity of the points in a $0$-dimensional complete intersection 
$\Co$-algebra, as in \cite{ArBook,GH}.

%t
%:     Theorem{thBezoutLocalConcret}
\begin{theorem} \label{thBezoutLocalConcret}
Let $g_1, \ldots, g_n\in \Co[\Xn]$ be polynomials of degrees $d_1, \dots, d_n$ respectively 
and  let   $p \in \mathcal{Z}_{\Co}(g_1, \ldots, g_n)$ be an isolated zero of multiplicity $r$. 
Let  $N_{1}, \ldots, N_{n}$ be   the
number of monomials of degree respectively  $d_1, \ldots, d_n$ in the variables in $X_1, \ldots, X_n$, and let $N=\sum_iN_{d_i}$. We can see $(g_1,\dots,g_n)$ as an element $(\ua)\in\Co^{N}$. Let us consider a ``slightly perturbed system'' $(\wh{g_1},\dots,\wh{g_n})$
corresponding to an element $(\underline{\wha})\in\Co^{N}$.
Then for all $\epsilon>0$ there exists $\epsilon_1$ with $0<\epsilon_1<\epsilon$  and $\delta_1>0$ such that for all 
$(\underline{\wha})\in \Omega(\ua,\delta_1)$, the perturbed system
has only finitely many zeroes in $\Omega(p,\epsilon_1)$; moreover the sum of multiplicities of these zeroes equals $r$. 
\end{theorem}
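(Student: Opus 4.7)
My plan is to apply Theorem~\ref{bezloc} items~\emph{1}--\emph{2} to the ``universal family'' of degree-$(d_1,\dots,d_n)$ systems, taking as base the ring of convergent power series at the true coefficient vector~$\ua$. After translating in~$\uX$ I may assume $p=\uze$. Introduce coefficient indeterminates $\underline{U}=(U_{i,j})$ and generic polynomials $F_i(\underline{U},\uX)=\sum_j U_{i,j}X^{\alpha_{i,j}}$ of degree $d_i$, so that $F_i(\ua,\uX)=g_i(\uX)$. Let $\gA:=\Co\{\underline{U}-\ua\}$, a Henselian local normal domain with maximal ideal $\fm=\gen{\underline{U}-\ua}$ and residue field~$\Co$. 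Set $\gB:=\gA[\uX]/\gen{F_1,\dots,F_n}$, a finitely presented \Alg with $n$ generators and $n$ relations whose residual algebra is $\ov\gB=\Co[\uX]/\gen{g_1,\dots,g_n}$; its localization at $\gen{\ov{x_1},\dots,\ov{x_n}}$ has $\Co$-dimension~$r$ by hypothesis on~$p$.

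Items~\emph{1}--\emph{2} of Theorem~\ref{bezloc} then yield $u\in 1+\fm\gB+\gen{\ux}$ such that $\gB[1/u]=\gB_{\fm+\gen{\ux}}$ is a free $\gA$-module of rank~$r$. Let $P_i(T)\in\gA[T]$ be the characteristic polynomial of the $\gA$-endomorphism ``multiplication by $x_i$'' on $\gB[1/u]$: $P_i$ is monic of degree~$r$, and since $\ov{x_i}$ is nilpotent in the residual local Artinian algebra $\ov\gB_{\gen{\ov{x_1},\dots,\ov{x_n}}}$, one has $P_i(T)\equiv T^r\pmod{\fm}$. Choose a polynomial representative $U(\underline{U},\uX)\in\gA[\uX]$ of~$u$; since $u\equiv 1\pmod{\fm\gB+\gen{\ux}}$ we have $U(\ua,\uze)=1$. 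Viewing $U$ and the coefficients of the $P_i$'s as convergent analytic germs, there exist polydiscs $V_0\ni\ua$ in~$\Co^N$ and $W_0\ni\uze$ in~$\Co^n$ such that $U$ is nowhere zero on $V_0\times W_0$ and the non-leading coefficients of each $P_i$ vanish continuously at~$\ua$.

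Given $\epsilon>0$, pick $\epsilon_1\in(0,\epsilon)$ with $\Omega(\uze,\epsilon_1)\subseteq W_0$. Applying Theorem~\ref{univar-cont} to the degenerating families $P_i(T)\to T^r$, choose $\delta_1>0$ with $\Omega(\ua,\delta_1)\subseteq V_0$ such that, for every $i$ and every $\underline{u}\in\Omega(\ua,\delta_1)$, every root of $P_i(\underline{u},T)$ has modulus~$<\epsilon_1$. Fix $\uwha\in\Omega(\ua,\delta_1)$ and set
\[
Z_{\uwha}:=\{\underline{\xi}\in\Co^n:F_i(\uwha,\underline{\xi})=0\text{ for all }i,\ U(\uwha,\underline{\xi})\ne 0\}.
\]
Base change of the free $\gA$-module $\gB[1/u]$ along the evaluation $\underline{U}\mapsto\uwha$ is canonically isomorphic to the coordinate ring of $Z_{\uwha}$ and has $\Co$-dimension~$r$, so $Z_{\uwha}$ is finite and its points carry total multiplicity~$r$. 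The Cayley--Hamilton relation $P_i(x_i)=0$, specialized at $\uwha$, forces every coordinate of every point of $Z_{\uwha}$ to have modulus~$<\epsilon_1$, hence $Z_{\uwha}\subseteq\Omega(\uze,\epsilon_1)$; conversely, since $U$ is non-vanishing on $\Omega(\ua,\delta_1)\times\Omega(\uze,\epsilon_1)\subseteq V_0\times W_0$, every zero of the perturbed system lying in $\Omega(\uze,\epsilon_1)$ belongs to $Z_{\uwha}$. This yields the asserted count of finitely many zeroes of total multiplicity~$r$ in $\Omega(p,\epsilon_1)$.

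The main obstacle is the algebraic--analytic bridge: confirming that $\Co\{\underline{U}-\ua\}$ genuinely fits the Henselian hypothesis of Theorem~\ref{bezloc}, that the abstractly produced element~$u$ admits a bona fide convergent representative with $U(\ua,\uze)=1$ (hence non-vanishing on some polydisc about $(\ua,\uze)$), and that base change of the free module $\gB[1/u]$ along the evaluation at~$\uwha$ faithfully recovers the coordinate ring of~$Z_{\uwha}$. Once these foundational points are in place, Cayley--Hamilton in the free module $\gB[1/u]$ (reflecting the residual nilpotence of the $\ov{x_i}$'s) together with the univariate continuity of roots in Theorem~\ref{univar-cont} is precisely the mechanism that confines the fibre zeroes to the prescribed polydisc around~$p$.
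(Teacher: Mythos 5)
Your strategy is the same as the paper's: translate $p$ to the origin, build the universal deformation $\gB=\gA[\uX]/\gen{F_1,\dots,F_n}$ over the Henselian ring $\gA=\Co\{\underline{U}-\ua\}$ of convergent germs at the coefficient vector $\ua$, invoke Theorem~\ref{bezloc}\,\emph{2} to get $\gB[1/u]$ free of rank~$r$, use characteristic polynomials of the multiplication operators together with Theorem~\ref{univar-cont} to confine the fibre zeroes, and then compare the perturbed system with the ``localised ideal'' to get the multiplicity count. This is exactly the paper's architecture, and items 1--4 at the end of your proof mirror the paper's closing checklist.

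The genuine gap is the one you flag last but do not close: the phrase ``base change of the free $\gA$-module $\gB[1/u]$ along the evaluation $\underline{U}\mapsto\uwha$'' is not a priori meaningful, because $\gA=\Co\{\underline{U}-\ua\}$ is a germ ring with no common radius of convergence, so evaluation at a nearby $\uwha$ is not a ring homomorphism defined on all of $\gA$. Consequently one cannot simply tensor a free $\gA$-module along $\uwha$ and read off dimension~$r$, nor conclude that the specialisation of $\gB[1/u]$ ``is'' the coordinate ring of $Z_{\uwha}$ with the right multiplicities. The paper's proof avoids this by never using a global morphism $\gA\to\Co$: it fixes the finitely many convergent series $U_{\beta,\alpha}$ arising from a lifted border basis, together with the finitely many coefficients appearing in the two explicit polynomial identities \eqref{identity1} and \eqref{identity2} (which express the equality of ideals \eqref{equalideals}), and shrinks the polydisc so all of these converge simultaneously. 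Specialising \eqref{conmuta} then shows, via Theorem~\ref{thBBR}, that the monomials $\cB$ remain a basis of $\Co[\uX]/\wh I$, giving the count~$r$; specialising \eqref{identity1} and \eqref{identity2} shows that $\wh I$ and $J(\uwha)$ cut out the same local rings at each point of $\Omega(\uze,\epsilon')$, giving the identification of multiplicities. In short, Cayley--Hamilton on $\gB[1/u]$ plus continuity of roots correctly bounds the coordinates of the fibre zeroes, but the multiplicity count requires you to replace the abstract ``base change of a free module'' step by the paper's concrete mechanism of specialising finitely many convergent identities on a common polydisc; without that, the proof has a hole precisely where you said ``the main obstacle'' lies.
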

%----------- fin theorem ----------------------------- 

%
\begin{proof}
 For sake of simplicity we assume 
$p=(0,\ldots,0)$, that is $r=\mathrm {dim}_{\Co}\,\Co[\ux]_{(\ux)}$ where $\Co[\ux]=\Co[\xn]=\Co[\Xn]/\!\gen {g_1, \ldots, g_n}$.
We consider  a family of new indeterminates: $$\wi a^{(i)}=(\wi a^{(i)}_j)_{j=1,\ldots, N_{i}}.$$ 
Let us denote by  $G_1(\wi a^{(1)},\uX), \ldots, G_n(\wi a^{(n)},\uX)$ the generic polynomials of degrees $d_1, \dots, d_n$
respectively.  So, $g_i=G_i(a^{(i)},\uX)$ for some $a^{(i)}\in \Co^{N_{i}}$. We set now 
$$\wi v^{(i)}_j=\wi a^{(i)}_j-a^{(i)}_j.$$ We can see the $\wi v^{(i)}_j$'s as indeterminate perturbations
of the $a^{(i)}_j$'s, letting
$\wi a^{(i)}_j=a^{(i)}_j+\wi v^{(i)}_j$.

\noindent We set  
\fbox{$\gA=\Co[[(\wi v^{(i)}_j)_{i=1,\dots,n;j=1,\dots,N_i}]]_{\rm alg}$}
  the ring of algebraic formal power series (or the ring 
\fbox{$\gA=\Co\{(\wi v^{(i)}_j)_{i=1,\dots,n;j=1,\dots,N_i} \}$} of analytic power series) 
with coefficients in~$\Co$ in the variables
$ \wi v^{(i)}_j $. In the second case this ring can be identified with 
the ring of germs of analytic functions in a 
neighborhood of $\ua=(a^{(i)}_j)_{i,j}$.
In both cases the ring is  Henselian with residue field 
$\Co$.

\smallskip \noindent Then, we apply Theorem \ref{bezloc} \emph{2} taking as $f_i$'s the $G_i$'s, 
 hence $\gB=\Aux=\AuX/\!\gen{ G_1, \ldots, G_n}$. Let us recall some elements in  the proof of  \ref{bezloc} \emph{2}.

\smallskip \noindent  We have \Algs $\gC=\gB[1/e]$ and $\D=\gC[1/s]=\gB_{\fm+\gen{\ux}}$,
and  $\D$ is a free~\Amo of rank $r$, with
$$
\ov\D=\ov\gC=\ov\gB[1/\ov e]=\ov{\gB}_{\gen{\ov{x_1},\dots,\ov{x_n}}}=\ov{\gC}_{\gen{\ov{x_1},\dots,\ov{x_n}}}=\ov{\D}_{\gen{\ov{x_1},\dots,\ov{x_n}}}.
$$

\noindent In this situation a  $\Co$-border basis $\cB$ of  $\ov{\gB_1}:=\ov{\gB}_{\gen{\ov{x_1},\dots,\ov{x_n}}}$ (with $r=\#\pcB$)
lifts with its rewriting rules to an $\gA$-basis of~$\D$. So we get
in $\D$ for every ${\beta}\in\pcB$ an equation 
\begin{equation}\label{r-w-inD}
%\textstyle 
 x^{\beta}-\sum\nolimits_{ \alpha\in \cB} U_{\beta,\alpha}  x^{\alpha} =0
\quad (\hbox{for some }U_{\beta,\alpha} \in \gA) 
\end{equation}

\noindent  These  $U_{\beta,\alpha}$  are algebraic power series in the formal coefficients $\wi v$'s and they 
 give analytic functions in a neighborhood of 
 $(\ua)=(a^{(i)}_j)_{i,j}$. 

\noindent  Let us denote by  $\Lambda_i$ the multiplication matrix by 
 $ x_i$ in the free \Amo $\D=\Span_{\gA}( \cB)$,  w.r.t.\ the basis $ \cB $ using (\ref{r-w-inD}).
 Notice that the entries of 
$\Lambda_i$ are either $0$ or $1$ or one of the 
$U_{\beta,\alpha}$'s and, since  $\cB$ is a border basis, one has  
\begin{equation}\label{conmuta}
\displaystyle \Lambda_i \Lambda_j - \Lambda_j \Lambda_i=0
\end{equation}

\vspace{-.5em}
\noindent in $\gA^{r\times r}$ for $i\neq j$. 

\noindent Now we remark that  in the proof of \ref{bezloc} \emph{2}
we have seen that the following ideals of $\gA[X_1,...,X_n]$ coincide.

\vspace{-.7em}\begin{small}
\begin{equation}\label{equalideals}
\langle X^{\beta}-\som_{\alpha\in \cB} U_{\beta,\alpha}X^{\alpha} : 
\beta\in \pcB \rangle = \langle G_1,\ldots, G_n,
(T+1)E-1, (Z+1)S-1 \rangle \gA[\uX,T,Z] \cap \gA[\uX]
\end{equation}
\end{small}

%\noindent This  comes from the fact that the map
%$$\textstyle
%\Psi: \AuX/\langle X^{\beta}-\sum_{\alpha\in \cB}U_{\beta,\alpha}X^{\alpha} : 
%\beta\in \pcB\rangle \longrightarrow \D
%$$ 
%is bijective (see the proof of \ref{bezloc}) . 

\vspace{-.7em}\noindent Hence one gets equalities $(\beta \in \pcB)$:
\begin{equation}\label{identity1}
 X^{\beta}-\som_{\alpha\in \cB}U_{\beta,\alpha}X^{\alpha} = \som_{i\in\lrbn}W_iG_i +  ((T+1)E-1)M
 + ((Z+1)S-1)R   
 \end{equation}
for some $W_i,M,R \in \gA[\uX, T, Z]$, and on the other side ($i=1,\dots,n$) 
\begin{equation}\label{identity2}\textstyle
G_i(\uX)= \sum_{\beta \in \pcB} Q_{i, \beta}(\uX)\cdot ( X^{\beta}-\sum_{ \alpha\in \cB} U_{\beta,\alpha}X^{\alpha} )
\end{equation}
for some $Q_{i, \beta} \in \gA[\uX]$. 
%\hum{a priori $Q_{i, \beta} \in \gA[\uX,T,Z]$?\\} 
We know  that identities (\ref{identity1}) and 
(\ref{identity2}) hold in the smallest sub-$\Co$-algebra of $\A$ containing the $U_{\beta,\alpha}$'s.

\noindent We  recall that $\fm=\geN{\wi v^{(i)}_{j};i,j}$, 
$E(\uX)\in 1 + {\mathfrak m} + \langle \uX \rangle$ 
and  $S(\uX,T)\in 1+ {\mathfrak m}[\uX,T]$. To stress the (analytic) dependence of $E$ and $S$ 
on the $ \wi v^{(i)}_j $'s we shall write
$E(\uwv,\uX)$ and $S(\uwv ,\uX, T).$

\smallskip \noindent Now let $P_i\in \gA[Y]$ denote the characteristic polynomial of  the multiplication by the image of $x_i$ in $\D$ (i.e.\ the characteristic polynomial of $\Lambda_i$).
Its coefficients are $\ZZ$-polynomials in the  $U_{\beta,\alpha}$'s.   By change of ring,   $\ov{P_i}\in \Co[Y]$ 
is the characteristic polynomial of the multiplication by $\ov{x_i}$ in $\ov \D=\ov \gB_1$.

\noindent
From  $\epsilon$, 
 by usual arguments of Taylor calculus for convergent series  we can find $\delta$ and $\epsilon'<\epsilon$ 
 such that  the following properties hold:

\smallskip \noindent i) All the $U_{\beta,\alpha}$ are convergent and also  the  coefficients in
(\ref{identity1}) and 
(\ref{identity2}) are convergent for any $ \wha^{(i)} \in \Omega(a^{(i)}, \delta)$.
Therefore  the same will hold for the coefficients of $P_i$.
 
\smallskip \noindent ii) The polynomial $E(\uwv ,\uX)$ as a polynomial with coefficients analytic functions
in a neighborhood of $a^{(i)}$'s
verify that for every 
 $(\wha^{(i)},\ux) \in \Omega(a^{(i)}; \delta)\times \Omega(\uze ; \epsilon')$, $\lvert E(\uwha, \ux)-1\rvert < \epsilon'$.

\noindent In particular for those points is $E(\uwha, \ux) \neq 0$
 and we may ask also  $\lvert 1-\frac{1}{E(\uwha,\ux)} \rvert < \epsilon'.$
 
\smallskip  \noindent iii) In the same way for 
 $ (\wha^{(i)},\ux,t) \in \Omega(a^{(i)}; \delta)\times 
 \Omega(\uze ; \epsilon')\times \Omega(0; \epsilon')$,
  $ S(\uwha,\ux,t) $  does not vanish and $\lvert 1-\frac{1}{S(\uwha,\ux,t)}\rvert <\epsilon'$.
 
\medskip \noindent  
We apply Theorem \ref{univar-cont} with $\epsilon'$ and $P=Y^r=\ov{P_i}\in \Co[Y]$, 
 there exists $\delta'\in \RR^{+}$, $\delta'<\delta$, such that 
if $Q\in \Co[Y]$ 
is a monic polynomial of degree $r$, whose 
coefficients belong to a neighborhood $\Omega(\uze ; \delta')$, then, the distance of each of the
$r$ complex roots of $Q$
 to $0$ is  less than $\epsilon'$. Notice that $Q$ can be considered as a 
perturbation
of $\ov{P_i}$.

\noindent Then, for every $\wha^{(i)} \in \Omega(a^{(i)}; \delta')$; we consider 
  $\wh{g_i}:=G_i(\wha^{(i)},X)$, 
   which is  ``a perturbation'' of the $g_j$,
and we denote by
$\wh U_{\beta,\alpha}$  the result of specializing $U_{\beta,\alpha}$ in the values of $\wha^{(i)}$'s. 
%In the same way, for every polynomial involved in the above 
%identities, we specialize  the $ a^{(i)}$'s to the $b^{(i)}$'s, obtaining, $\wh N, \wh M, \wh H \in
%\Co[\uX, Z, T]$.

%The equality \ref{identity2} together with \ref{identity1} (specializing for some values of $T$ and $Z$
%smallerthan $\epsilon'$ 
%that makes vanishing the terms: $ $ and $ $ assures that the following ideals of $\Co[X_1,\ldots,X_n] $ are equal:

%$$\langle X^{\alpha}-\som_{\beta\in \cB}\wh U_{\beta,\alpha}X^{\beta}  \rangle=\langle\wi{g_j}: j=1..n \rangle$$

\noindent Then  the following facts  hold true and alltogether prove the theorem taking the value $\delta_1$ as $\delta'$ 
and $\epsilon_1$ as~$\epsilon'$:

\smallskip \noindent 1.  $\{x^{\beta}: \beta\in \mathcal{B}\}$ is a basis of the $\Co$-vector space, 
because the relations 
of commutations (\ref{conmuta}) still hold under specialization at $\wha^{(i)}$ 
(see  {\bf 2.2.4} in section 2 on border bases). Consequently the ideal of $\Co[\uX]$
$\wh I=I(\wha^{(i)}):=\langle X^{\alpha}-\som_{\beta\in \cB}\wh U_{\beta,\alpha}X^{\beta}  \rangle$ has $r$ 
zeroes in $\Co^n$ counted with multiplicity.

\smallskip \noindent 2. 
The characteristic polynomial of the multiplication by $X_i$ in the ring 
$\Co[\uX]/\wh I$ is 
$\wh {P_i}$. For every~\hbox{$(\uxi) \in \Co^n$} which is a zero of $\wh I$ its coordinates $\xi_i$'s
 are roots of the polynomials $ \wh {P_i} (T)$. %($i=1,\dots,n$). 
 We know that these roots belong to 
$\Omega(\uze; \epsilon')$. On the other hand these zeroes are zeroes of the ideal 
$J(\wha^{(i)}):=  \langle (\wh {g_i})_{ i=1,\dots,n}  \rangle\Co[\uX]$,
because of the specialization of (\ref{identity2}).

\smallskip \noindent 3. Reciprocally given   $(\uxi)  \in \Co^n$ which is a zero of the ideal $J(\wha ^{(i)})$ and such that
$(\uxi)  \in \Omega(\uze; \epsilon') $ we are going to prove that $(\uxi) $ is a zero of $I(\wha^{(i)})$.
As $\wha^{(i)} \in $ and $(\uxi)  \in \Omega(\uze; \epsilon') $, by ii) above we have that 
$t:=1-\frac{1}{E(\wha^{(i)},\uxi)}\in \Omega(0,\epsilon')$. 
Consequently it makes sense to substitute in (\ref{identity1}),   
$(\wha^{(i)} , \uxi , t)$. since the 
right hand side vanishes, the same happen for the left hand side.

\smallskip \noindent 4. Given $(\uxi) \in  \Omega(\uze; \epsilon') $,  zero of $J(\wha^{(i)})$  and hence of $I(\wha^{(i)})$
the local rings $\Co[\uX]/I(\wha^{(i)})_{(\uX-\uxi )} $ and $\Co[\uX]/J(\wha^{(i)})_{(\uX-\uxi )}  $ coincide.
In fact this comes from (\ref{identity2}), and by substituting in (\ref{identity1}) $T$ by $1-\frac1{E(\wha^{(i)},\uX)}$
 and  $Z$ by $1-\frac{1}{S(\wha^{(i)},\uX,T)}$, which is allowed since both expressions 
 are rational regular at the point~$(\uxi)$.
\end{proof}

\bibliographystyle{plain}
\bibliography{AlonsoLombardi2015}

\end{document}